\documentclass[final,1p,times]{elsarticle}

\usepackage{amssymb}
\usepackage{amsmath}
\usepackage{amsthm}
\usepackage{url}
\usepackage{float}
\usepackage[T1]{fontenc}
\usepackage[utf8]{inputenc}
\usepackage{enumitem}

\newcommand{\dx}{\,\mathrm{d}x}

\journal{ }

\numberwithin{equation}{section}
\newtheorem{theorem}{Theorem}[section]
\newtheorem{lemma}[theorem]{Lemma}
\newtheorem{corollary}[theorem]{Corollary}
\newtheorem{definition}[theorem]{Definition}

\newtheorem{remark}[theorem]{Remark}

\allowdisplaybreaks

\newcommand{\R}{\mathbb{R}}

\newcommand{\QT}{Q_{\mathcal{T}}}
\newcommand{\dd}{\,\mathrm{d}}
\newcommand{\dt}{\,\mathrm{d}t}
\newcommand{\ds}{\,\mathrm{d}s}
\newcommand{\norm}[1]{\left\|#1\right\|}

\newcommand{\abs}[1]{\left|#1\right|}
\newcommand{\ppos}[1]{\left[#1\right]^{+}}
\newcommand{\pneg}[1]{\left[#1\right]^{-}}
\newcommand{\ptn}{\partial_t}

\begin{document}
	
	\begin{frontmatter}
		
		
		
		\title{Maximum principle and local stability for a class of coupled nonlinear thermo--reaction--phase systems} 
		

		\author[1]{Gossrin Jean-Marc Bomisso} \ead{bogojm@yahoo.fr}
		\author[1]{Ali Ouattara Kouma} \ead{benzeus96@gmail.com} 
		\author[1]{Marie Esther Anass\'e} \ead{hadassamarie43@gmail.com}

		\address[1]{Universit\'e Nangui Abrogoua, UFR Sciences Fondamentales et Appliqu\'ees, 02 BP 801 Abidjan 02, C\^ote d'Ivoire}

		\begin{abstract}
			We study a nonlinear coupled system of partial differential equations arising from thermo--reaction--phase models. The system combines a heat diffusion equation, temperature-dependent chemical reactions of Arrhenius type, and a phase variable, and is formulated as a strongly coupled parabolic problem with homogeneous Neumann boundary conditions.
			We first establish a maximum principle ensuring the positivity of the temperature on a suitable time interval, as well as the invariance of the physically admissible domain. In particular, we prove that the internal variables remain in the interval $[0,1]$.
			We then analyse the asymptotic behaviour of the system in the free regime, that is, in the absence of external forcing. By introducing a relative energy functional and exploiting the structure of the coupling terms, we obtain local asymptotic stability of a homogeneous stationary state.
			The model belongs to a broader class of coupled diffusion--reaction--phase systems.
		\end{abstract}

		\begin{keyword}
			Thermo-reaction system \sep maximum principle \sep stability analysis \sep Arrhenius kinetics
			
			\MSC[2020] 35K57 \sep 35B50 \sep 35B35 \sep 80A32

		\end{keyword}
		
	\end{frontmatter}
	
	\section{Introduction}
	
	Heat-sensitive materials constitute a class of engineering materials whose physical and chemical properties evolve significantly, and sometimes irreversibly, in response to a thermal stimulus. Among the most emblematic phenomena in this class, thermochromism refers to the modification of the optical absorption spectrum of a material under the effect of heat, leading to a macroscopically observable colour change. This phenomenon results from a fine coupling between heat transport, molecular chemical transformations and phase reorganisation, making it both a rich object of study from the modelling perspective and a delicate one from the mathematical analysis point of view. From an applicative standpoint, such materials appear in sectors as varied as intelligent food packaging, technical textiles, safety paints, passive display devices, and thermal protection of electronic components. A remarkable synthesis of the physical and chemical mechanisms underlying thermochromism in polymers is provided by Seeboth et al.~\cite{SeebothEtal2014}, who distinguish in particular the mechanisms of thermal isomerisation, supramolecular reorganisation, and proton transfer. MacLaren and White~\cite{MacLarenWhite2005} have established design rules for reversible thermochromic mixtures based on leuco dyes, whose decolouration-recolouration kinetics constitutes precisely the central mechanism of the model studied here. More generally, the notion of stimuli-responsive material has been highlighted by Stuart et al.~\cite{StuartEtal2010}, who survey the emerging applications of polymers capable of modifying their structure in response to external stimuli such as temperature, pH, or light. This unifying framework shows that the thermal variable should no longer be regarded as a mere external parameter, but as a dynamic quantity coupled to the internal state variables of the material.
	
	From this perspective, the mathematical description of these phenomena naturally leads to strongly coupled nonlinear systems of partial differential equations. Let $\Omega \subset \R^N$ ($N \geq 1$) be a bounded domain with boundary $\partial\Omega$ of class $C^{1,1}$, and let $\mathcal{T} > 0$ be a finite time horizon. We denote $\QT = \Omega \times (0,\mathcal{T})$. The system studied in this work couples three unknowns, namely the absolute temperature $\theta$ (in kelvin), the active chemical fraction $c$ representing the proportion of the coloured form of the material, and the macroscopic phase variable $\phi$ describing the visibility state of the material. These three unknowns are governed by
	\begin{equation}
		\label{eq:sys}
		\left\{
		\begin{aligned}
			\rho c_p\,\ptn\theta - \nabla\cdot\bigl(k(\phi)\nabla\theta\bigr)
			&= H_{\mathrm{ext}}(x,t) + \alpha L_c\,\ptn c + \alpha L_\phi\,\ptn\phi
			&& \text{in } \QT, \\[4pt]
			\ptn c - d_c\,\Delta c
			&= -\beta K_d(\theta)\,c + \gamma K_r(\theta)\,(1-c)
			&& \text{in } \QT, \\[4pt]
			\tau_\phi\,\ptn\phi - \varepsilon^2\Delta\phi + F'(\phi)
			&= \lambda(c-\phi)
			&& \text{in } \QT,
		\end{aligned}
		\right.
	\end{equation}
	complemented by the homogeneous Neumann boundary conditions
	\begin{equation}
		\label{eq:Neumann}
		\frac{\partial\theta}{\partial\mathbf{n}} = 0,\quad
		\frac{\partial c}{\partial\mathbf{n}} = 0,\quad
		\frac{\partial\phi}{\partial\mathbf{n}} = 0
		\quad\text{on } \partial\Omega\times(0,\mathcal{T}),
	\end{equation}
	and by the initial conditions
	\begin{equation}
		\label{eq:CI}
		\theta(\cdot,0) = \theta_0,\quad
		c(\cdot,0) = c_0,\quad
		\phi(\cdot,0) = \phi_0
		\quad\text{in } \Omega.
	\end{equation}
	The model parameters are as follows: $\rho$ denotes the mass density, $c_p$ the specific heat capacity, $k(\phi)$ the phase-dependent thermal conductivity, $H_{\mathrm{ext}}(x,t)$ an external heat source, $L_c$ and $L_\phi$ the thermal coefficients associated respectively with the chemical transformation and the phase transition, $\alpha$ the internal thermal coupling intensity parameter, $d_c$ the chemical diffusion coefficient, $\beta$ and $\gamma$ the respective intensities of the decolouration and recolouration mechanisms, $\tau_\phi$ the phase relaxation time, $\varepsilon$ the interface parameter, and $\lambda$ the relaxation coefficient coupling $c$ to $\phi$.
	
	The kinetic rates are assumed to be of Arrhenius type,
	\begin{equation}
		\label{eq:Arrhenius}
		K_d(\theta) = A_d \exp\!\left(-\frac{E_d}{R\theta}\right),
		\qquad
		K_r(\theta) = A_r \exp\!\left(-\frac{E_r}{R\theta}\right),
		\quad \theta > 0,
	\end{equation}
	where $A_d, A_r > 0$ are pre-exponential factors, $E_d, E_r > 0$ are activation energies, and $R > 0$ is the universal gas constant. For the phase variable, we choose the double-well potential
	\begin{equation}
		\label{eq:F}
		F(\phi) = \tfrac{1}{4}\phi^2(1-\phi)^2, \qquad
		F'(\phi) = \tfrac{1}{2}\phi(1-\phi)(1-2\phi),
	\end{equation}
	whose two minima at $\phi = 0$ and $\phi = 1$ correspond to the two stable states of the material.
	
	In order to place the analysis within a rigorous framework, we assume that the following hypotheses are satisfied. They are grouped into two families according to their scope within the paper.
	
	\smallskip
	\noindent\textit{General hypotheses (valid throughout the paper).}
	
	\begin{itemize}
		\item[\textbf{(H1)}] \textit{(Thermal conductivity)}
		$k\in C^1(\mathbb R)$ and there exist two constants $0<k_*\le k^*<\infty$ such that
		$k_*\le k(s)\le k^*$ for all $s\in\mathbb R$.
		
		\item[\textbf{(H2)}] \textit{(Kinetic rates)}
		$K_d,K_r\in C^1(0,\infty)$, with $K_d,K_r\ge 0$. Moreover, for every fixed $\theta_*>0$, $K_d$ and $K_r$ are bounded on $[\theta_*,\infty)$. The Arrhenius expressions \eqref{eq:Arrhenius} satisfy these conditions as soon as $\theta$ is uniformly bounded below.
		
		\item[\textbf{(H3)}] \textit{(Phase potential)}
		$F\in C^2(\mathbb R)$, $F\ge 0$, is given by \eqref{eq:F}.
		
		\item[\textbf{(H4)}] \textit{(Parameters)}
		$\rho,c_p,d_c,\tau_\phi,\varepsilon,\lambda,\beta,\gamma>0$ and $\alpha, \, L_c, \, L_{\phi} \, \ge 0$.
		
		\item[\textbf{(H5)}] \textit{(Thermal source)}
		We write $S = H_{\mathrm{ext}} + \alpha L_c\,\ptn c + \alpha L_\phi\,\ptn\phi$.
		We assume
		\[
		H_{\mathrm{ext}} \in L^2(0,\mathcal{T};\,L^2(\Omega)),
		\qquad
		S \in L^\infty(Q_{\mathcal{T}}),
		\qquad
		\exists\, C_0 \geq 0 \colon S(x,t) \geq -C_0
		\quad \text{a.e. in } Q_{\mathcal{T}}.
		\]
		
		\item[\textbf{(H6)}] \textit{(Initial data)}
		$\theta_0\in L^2(\Omega)$ with $\theta_0\ge\theta_*>0$ a.e. in $\Omega$ and $\theta_0 \in L^\infty(\Omega)$;
		$c_0\in H^1(\Omega)$ with $0\le c_0\le 1$ a.e. in $\Omega$;
		$\phi_0\in H^1(\Omega)$ with $0\le\phi_0\le 1$ a.e. in $\Omega$.
		
	\end{itemize}
	
	\smallskip
	\noindent\textit{Additional hypotheses (required for Section~\ref{sec:stab} only).}
	
	\begin{itemize}
		\item[\textbf{(H7)}] \textit{(Absence of external source)}
		$H_{\mathrm{ext}}=0$.
		
		\item[\textbf{(H8)}] \textit{(Homogeneous stationary state)}
		There exists $(\bar\theta,\bar c,\bar\phi)$ satisfying $\bar\theta>0$, $0\le\bar c\le 1$, $0\le\bar\phi\le 1$, and the equilibrium relations
		\[
		-\beta K_d(\bar\theta)\bar c+\gamma K_r(\bar\theta)(1-\bar c)=0,
		\qquad
		F'(\bar\phi)=\lambda(\bar c-\bar\phi).
		\]
		
		\item[\textbf{(H9)}] \textit{(Local stability of the potential)}
		$F''(\bar\phi)>0$.
		
		\item[\textbf{(H10)}] \textit{(Zero-mean condition and weak coupling)} 
		We assume that
		\[
		\int_\Omega (\theta_0 - \bar{\theta})\,dx = 0,
		\]
		and that the parameter $\alpha$ satisfies $0 \leq \alpha < \alpha_0$, where
		\begin{equation}
			\label{eq:alpha0}
			\alpha_0 = \min\left\{
			\frac{\sqrt{\rho c_p}}{\sqrt{2}\,L_c},\;
			\frac{\sqrt{\rho c_p\,(2m_F+\tau_\phi)}}{\sqrt{2}\,L_\phi},\;
			\frac{\rho c_p\,\sqrt{d_c\,k_*}}{2\,k^*\,L_c},\;
			\frac{\rho c_p\,\varepsilon\,\sqrt{k_*}}{2\,k^*\,L_\phi}
			\right\}.
		\end{equation}
		
	\end{itemize}

	In this framework, the system \eqref{eq:sys} fits into several classical streams of the literature. On the one hand, the presence of Arrhenius laws brings it close to models arising in combustion theory, where the exponential dependence on $\tfrac{1}{\theta}$ produces well-known analytical difficulties, particularly near $\theta=0$; in this regard, the monograph of Bebernes and Eberly~\cite{BebernesEberly1989} remains an important reference for the rigorous study of nonlinear parabolic problems involving such reaction terms. On the other hand, the reaction-diffusion term governing the evolution of $c$ belongs to the broader topic of global existence and mass control in coupled systems, as illustrated by the survey of Pierre~\cite{PierreM2010}. The variable $\phi$, in turn, falls within the phase-field theory initiated by Cahn and Hilliard~\cite{CahnHilliard1958} and Allen and Cahn~\cite{AllenCahn1979}, and widely developed since. In this direction, Elliott and Zheng~\cite{ElliottZheng1986} established fundamental results for the Cahn--Hilliard equation, while Evans, Soner and Souganidis~\cite{EvansEtal1992} shed light on the connection between the Allen--Cahn equation and motion by mean curvature. The monograph of Miranville~\cite{MiranvilleBook2019} provides a comprehensive overview of the analysis of Cahn--Hilliard and Allen--Cahn type systems. Added to this family of models is the central reference of the Caginalp model~\cite{Caginalp1986}, which couples temperature and order parameter via a thermal relaxation term; Colli and Sprekels~\cite{ColliSprekels1995} subsequently studied several variants of this framework by proving the existence of global weak solutions. The Penrose--Fife model~\cite{PenroseFife1990}, which is thermodynamically consistent, has deeply influenced the analysis of systems with temperature singularities, and the works of Kenmochi and Niezg\'odka~\cite{KenmochiNiezgodka1994} illuminate the difficulties related to nonlinear dependencies on $\tfrac{1}{\theta}\cdotp$ Finally, concerning the asymptotic behaviour, the general theory of dissipative dynamical systems developed by Temam~\cite{TemamBook1997} and the results of Miranville and Zelik~\cite{MiranvilleZelik2004} on exponential attractors constitute a natural reference framework.
	
	The present work stands at the confluence of these different streams, but distinguishes itself from them by a richer and more strongly coupled structure. Indeed, the system \eqref{eq:sys} introduces a chemical variable $c$ subject to a reversible Arrhenius-type kinetics, with coexistence of decolouration ($K_d$) and recolouration ($K_r$) mechanisms, and couples this variable to an Allen--Cahn type phase dynamics via the term $\lambda(c-\phi)$. Moreover, the heat equation is affected by time-derivative coupling terms, $\alpha L_c\,\ptn c$ and $\alpha L_\phi\,\ptn\phi$, which strengthens the nonlinearity of the problem and complicates the construction of energy estimates. To the best of our knowledge, this architecture does not appear in classical models of thermochromism. The first is the singularity of the Arrhenius rates near $\theta=0$, which requires guaranteeing strict positivity of the temperature even before the equations on $c$ can be written. The second is the strongly non-symmetric nature of the coupling between the three equations, since each possesses its own structure: nonlinear diffusion for heat, semilinear reaction-diffusion for $c$, and an Allen--Cahn type equation for $\phi$. The third is the hierarchical interdependence of the a priori bounds: the positivity of $\theta$ conditions the well-definedness of $K_d(\theta)$ and $K_r(\theta)$, which in turn conditions the control of $c$, which itself appears in the control of $\phi$ via the forcing term $\lambda(c-\phi)$. More recent contributions have also investigated multiphysics phase-field systems 
	involving thermal and reactive couplings. In this direction, Miranville et al. in~\cite{MiranvilleQuintanillaSaoud2020} studied the asymptotic behaviour 
	of a coupled Cahn-Hilliard/Allen-Cahn system with temperature, establishing the 
	existence of exponential and finite-dimensional global attractors under both the 
	classical Fourier heat law and a type~III heat conduction law. This framework was 
	further extended by Makki et al. (see~\cite{MakkiMiranvilleSaoud2020}), 
	who incorporated singular potentials of logarithmic type into the same class of 
	thermally coupled phase-field systems, and derived additional regularity results 
	including a strict separation property from the pure phases. From a different 
	perspective, Ntsokongo in~\cite{Ntsokongo2023} analysed an Allen-Cahn type equation 
	nonlinearly coupled with a temperature equation and motivated by applications in 
	chemistry, proving well-posedness as well as the existence of finite-dimensional 
	global and exponential attractors.
	
	The analysis carried out here rests on a two-step strategy. In the first step, we establish a maximum principle adapted to the system, which ensures the preservation of the physically admissible domain and, in particular, the positivity of the temperature on an appropriate time interval. This result is fundamental since it guarantees that the Arrhenius kinetic laws remain well-defined and bounded, while establishing, as a mathematical consequence, that $c$ and $\phi$ remain in the interval $[0,1]$ for all time. In the second step, we study the local asymptotic stability of a homogeneous stationary state by means of a suitably chosen relative energy functional. The idea consists of linearising the system around this state, testing the perturbed equations by the unknowns themselves, and then combining the resulting estimates with Poincaré and Gronwall type inequalities in order to obtain exponential decay under appropriate coercivity assumptions.  The model studied in the present work belongs 
	to this broader class of coupled diffusion--reaction--phase systems, but 
	distinguishes itself by the simultaneous presence of Arrhenius-type chemical 
	kinetics, a reversible reaction-diffusion equation for the active fraction, and 
	an Allen--Cahn phase dynamics, all three of which interact through the thermal 
	variable.
	
	The organisation of the paper is as follows. Section~\ref{sec:prelim} gathers the preliminary tools needed for the analysis, in particular the positive and negative parts of a measurable function, the Gronwall lemma, and the abstract parabolic maximum principle. Section~\ref{sec:max} is devoted to the maximum principle for the system, establishing sequentially the positivity of $\theta$, then the invariance of $[0,1]$ for $c$, and then for $\phi$. Section~\ref{sec:stab} develops the local asymptotic stability analysis. Finally, a general conclusion synthesises the main results obtained, puts their scope in perspective, and outlines directions for future research, notably with a view to a global analysis and possible numerical validations.
	\section{Preliminary tools}
	\label{sec:prelim}
	
	We recall in this section the definitions and abstract results that will be used systematically in the proofs.
	
	\begin{definition}[\textbf{Sub-solution and super-solution} \cite{EvansBook2010}]
		Let $f : \QT \times \R \to \R$ be a function that is Lipschitz continuous with respect to its last variable. We say that $\underline{u}$ (resp. $\overline{u}$) is a \emph{sub-solution} (resp. \emph{super-solution}) of the parabolic problem with Neumann conditions
		\[
		\ptn u - \nabla\cdot(a(x)\nabla u) = f(x,t,u), \quad
		\frac{\partial u}{\partial \mathbf{n}} = 0 \text{ on } \partial\Omega,\quad
		u(\cdot,0) = u_0,
		\]
		if $\underline{u}$ (resp. $\overline{u}$) possesses the same regularity properties as a weak solution and satisfies
		\[
		\ptn \underline{u} - \nabla\cdot(a\nabla \underline{u}) \leq f(x,t,\underline{u})
		\quad (\text{resp. } 	\ptn \overline{u} - \nabla\cdot(a\nabla \overline{u}) \geq f(x,t,\overline{u})) \quad \text{a.e. in } \QT,
		\]
		\[
		\frac{\partial \underline{u}}{\partial\mathbf{n}} \leq 0
		\quad \left( \text{resp. } 	\frac{\partial \overline{u}}{\partial\mathbf{n}} \geq 0 \right) \quad \text{on } \partial\Omega,
		\]
		\[
		\underline{u}(x,0) \leq u_0(x)
		\quad (\text{resp. } \overline{u}(x,0) \geq u_0(x)).
		\]
	\end{definition}
	
	\begin{definition}[\textbf{Positive and negative parts} \cite{Ziemer}]
		For any measurable function $u : \Omega \to \R$, we define
		\[
		\ppos{u}(x) = \max(u(x), 0), \qquad \pneg{u}(x) = \max(-u(x), 0).
		\]
		We have $$u = \ppos{u} - \pneg{u}, \; |u| = \ppos{u} + \pneg{u}, \; \ppos{u} \cdot \pneg{u} = 0.$$
		If $u \in H^1(\Omega)$, then $\ppos{u}, \pneg{u} \in H^1(\Omega)$ with
		\[
		\nabla\ppos{u} = \mathbf{1}_{\{u>0\}}\nabla u, \qquad
		\nabla\pneg{u} = -\mathbf{1}_{\{u<0\}}\nabla u
		\quad\text{a.e. in } \Omega.
		\]
	\end{definition}
	
	\begin{lemma}[\textbf{Gronwall, integral form} \cite{EvansBook2010}]
		\label{lem:Gronwall}
		Let $\psi : [0,\mathcal{T}] \to \R_+$ be an absolutely continuous function satisfying
		\[
		\psi(t) \leq a + b\int_0^t \psi(s)\ds
		\]
		for almost every $t \in (0,\mathcal{T})$, with $a, b \geq 0$. Then $\psi(t) \leq a\,e^{bt}$ for all $t \in [0,\mathcal{T}]$.
	\end{lemma}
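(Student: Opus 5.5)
The plan is to set $\Phi(t) = a + b\int_0^t \psi(s)\ds$, which is absolutely continuous (indeed $C^1$, since $\psi$ is continuous) and satisfies $\Phi(0)=a$ and $\Phi'(t) = b\,\psi(t)$ for every $t$. Because $\psi$ is continuous, an inequality $\psi(t)\le \Phi(t)$ that holds for almost every $t$ extends to every $t\in[0,\mathcal{T}]$: both sides are continuous, and the set where the inequality holds is dense. Combining this with $b\ge 0$ yields the differential inequality
\[
\Phi'(t) = b\,\psi(t) \le b\,\Phi(t) \qquad \text{for all } t\in[0,\mathcal{T}].
\]

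Next comes the integrating factor. We compute
\[
\frac{\mathrm{d}}{\mathrm{d}t}\bigl(e^{-bt}\Phi(t)\bigr) = e^{-bt}\bigl(\Phi'(t) - b\,\Phi(t)\bigr) \le 0 .
\]
The function $e^{-bt}\Phi$ is absolutely continuous, so integrating this inequality from $0$ to $t$ is legitimate and gives $e^{-bt}\Phi(t)\le \Phi(0) = a$. Therefore
\[
\psi(t)\le \Phi(t)\le a\,e^{bt} \qquad \text{for all } t\in[0,\mathcal{T}],
\]
which is the claim. The degenerate case $b=0$ is immediate, since then $\psi\le a$ directly. Nonnegativity of $\psi$ is not really needed beyond ensuring that the integral is meaningful.

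The only delicate point is the passage from ``almost every $t$'' to ``every $t$''. The continuity argument above handles it. Alternatively, one can work entirely a.e.: the derivative inequality $\Phi'\le b\Phi$ holds a.e., and the fundamental theorem of calculus for absolutely continuous functions then gives the bound for all $t$ without any pointwise extension. That version avoids the issue altogether, so I expect no real obstacle.
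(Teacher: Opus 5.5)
Your proof is correct, and it proves exactly the stated bound. The paper gives no proof of its own: it only cites Evans. There the integral form is obtained differently. One sets $\eta(t)=\int_0^t\psi(s)\ds$, observes $\eta'\le b\eta+a$ a.e., and applies the differential Gronwall bound $\eta(t)\le e^{bt}\bigl(\eta(0)+at\bigr)=ate^{bt}$. This gives $\psi(t)\le a\bigl(1+bte^{bt}\bigr)$ for a.e.\ $t$.

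That cited route only delivers a weaker constant, since $1+bte^{bt}\ge e^{bt}$, and only an a.e.\ statement. So it does not literally give the lemma as the paper states it. Your route differs in two ways that recover the full statement:
\begin{itemize}
\item you apply the integrating factor to $\Phi=a+b\int_0^t\psi$ rather than to $\int_0^t\psi$, which yields the sharp bound $ae^{bt}$;
\item you use the continuity of $\psi$ to upgrade $\psi\le\Phi$ from a.e.\ to every $t$, which makes the conclusion pointwise.
\end{itemize}
Continuity is the only consequence of absolute continuity that you actually use. You are also right that $\psi\ge 0$ plays no role.

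One small correction to your closing remark. The purely a.e.\ route gives $\Phi(t)\le ae^{bt}$ for every $t$, but $\psi(t)\le\Phi(t)$ still holds only for a.e.\ $t$. So continuity of $\psi$ is needed there as well to reach the conclusion for all $t\in[0,\mathcal{T}]$; that route does not avoid the issue altogether. Your main argument handles this correctly.
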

	
	\begin{theorem}[\textbf{Parabolic maximum principle} \cite{EvansBook2010}]
		\label{thm:max_par}
		Let $u$ be a weak solution of
		$\ptn u - \nabla\cdot(a(x)\nabla u) = f(x,t,u)$
		in $\QT$, with $a \geq a_0 > 0$ and $f$ uniformly Lipschitz in $u$. If $\underline{u}$
		is a sub-solution with $u(\cdot,0) \geq \underline{u}(\cdot,0)$ a.e., then
		$u \geq \underline{u}$ a.e. in $\QT$. Likewise, if $\overline{u}$ is a super-solution with
		$u(\cdot,0) \leq \overline{u}(\cdot,0)$, then $u \leq \overline{u}$ a.e. in $\QT$.
	\end{theorem}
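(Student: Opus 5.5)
The plan is the standard comparison argument via an energy estimate on the negative part of the difference, which avoids any pointwise regularity beyond that of weak solutions. I will treat the sub-solution case; the super-solution case follows by the same argument applied to $\overline{u}-u$, or by changing $u$ into $-u$ and $f(x,t,s)$ into $-f(x,t,-s)$.

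\textbf{Step 1.} Set $w=u-\underline{u}$. Subtracting the weak formulation of the sub-solution inequality from that of the equation for $u$ gives, for every test function $v\in H^1(\Omega)$ with $v\ge 0$ and a.e. $t$,
\[
\langle \ptn w, v\rangle + \int_\Omega a\nabla w\cdot\nabla v\dx \;\ge\; \int_\Omega \bigl(f(x,t,u)-f(x,t,\underline{u})\bigr)v\dx .
\]
The boundary condition $\partial\underline{u}/\partial\mathbf{n}\le 0$ enters with the correct sign. Indeed, integrating by parts for $\underline{u}$ produces the boundary term $-\int_{\partial\Omega}a\,\partial_{\mathbf n}\underline{u}\,v$. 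This term is nonnegative when $v\ge0$, so it can be dropped in the right direction.

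\textbf{Step 2.} Choose $v=\pneg{w}\in L^2(0,\mathcal{T};H^1(\Omega))$; by the definition of positive and negative parts this is admissible. Three facts are then used:
\begin{itemize}
\item the chain rule $\langle\ptn w,\pneg{w}\rangle=-\tfrac12\frac{\dd}{\dt}\norm{\pneg{w}}_{L^2}^2$;
\item the gradient identity $\nabla w\cdot\nabla\pneg{w}=-|\nabla\pneg{w}|^2$;
\item the Lipschitz bound $|f(u)-f(\underline{u})|\le L|w|$, which on $\{w<0\}$ gives $(f(u)-f(\underline{u}))\pneg{w}\ge -L(\pneg{w})^2$.
\end{itemize}
Substituting these and multiplying by $-1$ yields
\[
\tfrac12\frac{\dd}{\dt}\norm{\pneg{w}}_{L^2}^2 + a_0\norm{\nabla\pneg{w}}_{L^2}^2 \le L\norm{\pneg{w}}_{L^2}^2 .
\]

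\textbf{Step 3.} Drop the gradient term and integrate in time. With $\psi(t)=\norm{\pneg{w}(t)}_{L^2}^2$ this gives $\psi(t)\le \psi(0)+2L\int_0^t\psi\ds$. The initial ordering $u(\cdot,0)\ge\underline{u}(\cdot,0)$ gives $\psi(0)=0$. Lemma~\ref{lem:Gronwall}, applied with $a=0$ and $b=2L$, then yields $\psi\equiv0$, hence $u\ge\underline{u}$ a.e. in $\QT$.

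\textbf{Main obstacle.} The delicate point is justifying the chain rule for $\tfrac{\dd}{\dt}\norm{\pneg{w}}^2$ with only weak regularity. I would handle it through the Lions--Magenes type lemma for $w\in L^2(H^1)$ with $\ptn w\in L^2((H^1)')$. If that is not available, I would regularise the negative part by a smooth convex approximation and pass to the limit. The regularity of $\underline{u}$ is the same as that of a weak solution, which is exactly what makes this step legitimate.
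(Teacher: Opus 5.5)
Your proof is correct. The paper gives no proof of this statement; it is only quoted from Evans, where parabolic maximum principles are established for classical ($C^2_1$) solutions by pointwise arguments at a maximum. Your energy estimate on $\pneg{w}$ is therefore the version suited to the weak-solution framework in which the statement is phrased. It is also exactly the computation the paper carries out in Theorems~\ref{thm:theta_pos}, \ref{thm:c_bounds} and~\ref{thm:phi_bounds}.

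The one genuine addition on your side is the Gronwall step (Lemma~\ref{lem:Gronwall} with $a=0$, $b=2L$). For a general Lipschitz $f$ the reaction difference can make $\norm{\pneg{w}}_{L^2(\Omega)}^2$ grow. In the paper's applications it has a favourable sign, so plain monotonicity suffices there.

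Your treatment of the boundary term is fine, and so is the reduction of the super-solution case via $s\mapsto -f(x,t,-s)$. The only tacit requirement is that $\underline{u}$ be regular enough for $a\nabla\underline{u}$ to have a normal trace on $\partial\Omega$. The paper's definition presupposes this, and it is trivial for the spatially constant sub- and super-solutions used in Section~\ref{sec:max}.
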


	\section{Maximum principle for the system}
	\label{sec:max}
	The analysis is carried out under the assumption that the system admits a sufficiently regular solution on the considered time interval.\\
\indent	We establish in this section the domain invariance properties for each of the three unknowns of the system~\eqref{eq:sys}-\eqref{eq:CI}, under hypotheses \textbf{(H1)}--\textbf{(H6)}. The results are obtained sequentially: the positivity of $\theta$ is established first, as it conditions the definition and sign of the kinetic rates, which are themselves necessary for the proofs concerning $c$ and $\phi$.
	
	\subsection{Positivity of the temperature}
	\label{sub:theta}
	
	\begin{theorem}[\textbf{Maximum principle for} $\theta$]
		\label{thm:theta_pos}
		Under hypotheses \textup{\textbf{(H1)}}-\textup{\textbf{(H6)}}, any solution $\theta$ of the system \eqref{eq:sys}-\eqref{eq:CI} satisfies, for almost every $(x,t) \in \QT$,
		\[
		\theta(x,t) \geq \underline{\theta}(t) > 0,
		\]
		where $\underline{\theta}(t) = \theta_* - \dfrac{C_0}{\rho c_p}\,t$, and this holds for all
		$t \in [0, T_0)$, with
		\[
		T_0 = \frac{\rho c_p\,\theta_*}{C_0} > 0
		\qquad\bigl(\text{with the convention } T_0 = +\infty \text{ if } C_0 = 0\bigr).
		\]
	\end{theorem}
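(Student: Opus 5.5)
We treat the heat equation as a linear parabolic equation in $\theta$ with given coefficients. The conductivity is $a(x,t)=k(\phi(x,t))$, bounded between $k_*$ and $k^*$ by \textbf{(H1)}. The right-hand side $S$ is regarded as a known source in $L^\infty(\QT)$ with $S\ge -C_0$ by \textbf{(H5)}. Dividing by $\rho c_p$, the equation reads $\ptn\theta-\frac{1}{\rho c_p}\nabla\cdot(k(\phi)\nabla\theta)=\frac{S}{\rho c_p}$. We then compare $\theta$ with the spatially constant function $\underline{\theta}(t)=\theta_*-\frac{C_0}{\rho c_p}t$.

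First we check that $\underline{\theta}$ is a sub-solution in the sense of the Definition of Section~\ref{sec:prelim}. Since it does not depend on $x$, we have $\nabla\underline{\theta}=0$, so the diffusion term vanishes and $\partial\underline{\theta}/\partial\mathbf{n}=0$ on $\partial\Omega$. Moreover
\[
\ptn\underline{\theta}=-\frac{C_0}{\rho c_p}\le \frac{S}{\rho c_p},
\]
and $\underline{\theta}(0)=\theta_*\le\theta_0$ a.e. by \textbf{(H6)}.

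Rather than invoking Theorem~\ref{thm:max_par} verbatim, we give the direct energy argument. Theorem~\ref{thm:max_par} is stated for a time-independent $a(x)$, whereas here $a=k(\phi)$ depends on time, although the proof is identical. Set $w=\underline{\theta}-\theta$. Then $w$ satisfies
\[
\rho c_p\ptn w-\nabla\cdot(k(\phi)\nabla w)=-C_0-S\le 0,
\]
with homogeneous Neumann data and $w(\cdot,0)\le 0$. We test with $\ppos{w}\in H^1(\Omega)$, using the gradient formula for positive parts recalled in Section~\ref{sec:prelim}. This gives
\[
\frac{\rho c_p}{2}\frac{d}{dt}\norm{\ppos{w}}_{L^2}^2+\int_\Omega k(\phi)\abs{\nabla\ppos{w}}^2\dx=\int_\Omega(-C_0-S)\ppos{w}\dx\le 0.
\]
The boundary term disappears because of the Neumann conditions. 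The diffusion term is nonnegative by \textbf{(H1)}. Hence $\norm{\ppos{w}(t)}^2\le\norm{\ppos{w}(0)}^2=0$. Lemma~\ref{lem:Gronwall} with $a=b=0$ yields this if preferred. So $\ppos{w}=0$, that is, $\theta\ge\underline{\theta}$ a.e. in $\QT$.

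Finally, $\underline{\theta}(t)>0$ exactly when $t<T_0=\rho c_p\theta_*/C_0$, with $T_0=+\infty$ when $C_0=0$ since $\underline{\theta}\equiv\theta_*$ in that case. This gives the claim on $[0,T_0)$.

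The main obstacle is the justification of the chain rule $\frac{d}{dt}\frac12\norm{\ppos{w}}^2=\langle\ptn w,\ppos{w}\rangle$ for weak solutions. This is standard for $w\in L^2(0,\mathcal{T};H^1)$ with $\ptn w\in L^2(0,\mathcal{T};(H^1)')$, via a Steklov average or regularisation argument; it is also immediate under the standing assumption that the solution is sufficiently regular. A second subtlety is the apparent circularity: $S$ contains $\ptn c$ and $\ptn\phi$, which involve $K_d(\theta)$ and $K_r(\theta)$. This is handled because \textbf{(H5)} postulates the bound on $S$ directly, so the argument needs only the linear structure of the heat equation.
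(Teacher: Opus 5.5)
Your proof is correct and follows the paper's argument. You use the same spatially constant sub-solution $\underline{\theta}(t)=\theta_*-\frac{C_0}{\rho c_p}t$ and the same Stampacchia-type energy estimate; your test function $\ppos{w}$ with $w=\underline{\theta}-\theta$ is exactly the paper's $\pneg{\theta-\underline{\theta}}$. Your extra remarks are sound refinements rather than a different route: you note that the conductivity $k(\phi)$ is time-dependent, which Theorem~\ref{thm:max_par} as stated does not cover, and you justify the chain rule for $\norm{\ppos{w}}_{L^2}^2$.
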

	
	\begin{proof}
		We introduce the affine function
		\[
		\underline{\theta}(t) = \theta_* - \frac{C_0}{\rho c_p}\,t,
		\]
		which is strictly positive on $[0,T_0)$. This function (independent of $x$) is a sub-solution of the first equation of the system. Indeed, since $\nabla\underline{\theta} = 0$, the diffusion term vanishes. The time derivative is $\ptn\underline{\theta} = -\frac{C_0}{\rho c_p}$, so that:
		\[
		\rho c_p\,\ptn\underline{\theta} - \nabla\cdot(k(\phi)\nabla\underline{\theta})
		= -C_0 \leq H_{\mathrm{ext}} + \alpha L_c\,\ptn c + \alpha L_\phi\,\ptn\phi
		\quad\text{a.e. in } \QT,
		\]
		by hypothesis \textbf{(H4)}. Moreover, $\underline{\theta}(0) = \theta_* \leq \theta_0(x)$
		a.e. in $\Omega$, and $\nabla\underline{\theta}\cdot\mathbf{n} = 0$ on $\partial\Omega$.
		
		Set $w = \theta - \underline{\theta}$. By linearity, $w$ satisfies
		\[
		\rho c_p\,\ptn w - \nabla\cdot(k(\phi)\nabla w)
		= S(x,t) + C_0 \geq 0 \quad\text{a.e. in } \QT,
		\]
		with $\nabla w\cdot\mathbf{n} = 0$ on $\partial\Omega$ and $w(\cdot,0) = \theta_0 - \theta_* \geq 0$.
		
		We multiply the equation on $w$ by $-\pneg{w} \leq 0$ and integrate over $\Omega$. Using
		the identity $\ptn w \cdot \pneg{w} = -\frac12 \ptn ([\pneg{w}]^2)$ and integration by parts,
		\[
		\int_\Omega \nabla\cdot(k(\phi)\nabla w)\cdot(-\pneg{w})\dx
		= -\int_\Omega k(\phi)|\nabla\pneg{w}|^2\dx
		\leq 0,
		\]
		we obtain:
		\[
		\frac{\rho c_p}{2}\frac{\dd}{\dt}\norm{\pneg{w}(t)}_{L^2(\Omega)}^2
		+ k_*\norm{\nabla\pneg{w}(t)}_{L^2(\Omega)}^2 \leq 0.
		\]
		Hence $t \mapsto \norm{\pneg{w}(t)}_{L^2}(\Omega)^2$ is non-increasing. Since
		$w(\cdot,0) \geq 0$ implies $\pneg{w}(\cdot,0) = 0$, we conclude:
		\[
		\norm{\pneg{w}(t)}_{L^2(\Omega)}^2 = 0 \quad\text{for all } t \in [0,T_0),
		\]
		whence $w \geq 0$, i.e., $\theta(x,t) \geq \underline{\theta}(t) > 0$ a.e. in $\QT$.
		
		In particular, for every $\mathcal T<T_0$, we set
		\begin{equation}
			\label{eq3.1}
			\underline{\theta}_{\mathcal T} =\min_{t\in[0,\mathcal T]}\underline{\theta}(t)
			=\theta_*-\frac{C_0}{\rho c_p}\,\mathcal T>0.
		\end{equation}
		Since $\underline{\theta}$ is decreasing on $[0,\mathcal T]$, we deduce that
		\[
		\theta(x,t)\ge \underline{\theta}_{\mathcal T}\quad \text{a.e. in }Q_{\mathcal T}.
		\]
	\end{proof}
	
	\begin{remark}
		\label{rem:theta_global}
		When $C_0 = 0$ (everywhere non-negative total source), the sub-solution $\underline{\theta}(t) = \theta_*$
		is constant and the result holds globally in time. This situation covers the case
		$H_{\mathrm{ext}} \geq 0$, $L_c \geq 0$, $L_\phi \geq 0$ (exothermic exchanges).
	\end{remark}
	
	\begin{corollary}[\textbf{Bounds on the Arrhenius rates}]
		\label{cor:Arrhenius}
		Under the hypotheses of Theorem~\ref{thm:theta_pos}, for every $\mathcal{T} < T_0$, there
		exists $M > 0$ such that
		\[
		0 \leq K_d(\theta(x,t)) \leq M, \quad 0 \leq K_r(\theta(x,t)) \leq M
		\quad\text{a.e. in } \QT.
		\]
		As a consequence, the nonlinearities of the equation on $c$ are uniformly bounded and the system is well-defined at the level of weak formulations on $[0,\mathcal{T}]$.
	\end{corollary}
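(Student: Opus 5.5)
The plan is to combine the uniform lower bound on the temperature from Theorem~\ref{thm:theta_pos} with the boundedness part of hypothesis \textbf{(H2)}. Fix $\mathcal{T} < T_0$. By \eqref{eq3.1}, the quantity $\underline{\theta}_{\mathcal T} = \theta_* - \frac{C_0}{\rho c_p}\mathcal{T}$ is strictly positive. Theorem~\ref{thm:theta_pos} then gives $\theta(x,t) \ge \underline{\theta}_{\mathcal T}$ a.e. in $\QT$. So, up to a null set, $\theta$ takes values in $[\underline{\theta}_{\mathcal T},\infty)$. In particular $K_d(\theta(x,t))$ and $K_r(\theta(x,t))$ are well defined a.e., since $K_d,K_r$ are only defined on $(0,\infty)$.

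Next I apply \textbf{(H2)} with the choice $\theta_* := \underline{\theta}_{\mathcal T}$. It says $K_d$ and $K_r$ are nonnegative and bounded on $[\underline{\theta}_{\mathcal T},\infty)$. I set
\[
M := \max\Bigl\{\sup_{s\ge \underline{\theta}_{\mathcal T}} K_d(s),\ \sup_{s\ge \underline{\theta}_{\mathcal T}} K_r(s),\ 1\Bigr\},
\]
where the $1$ is only there to guarantee $M>0$. Composing with $\theta$ yields $0\le K_d(\theta),K_r(\theta)\le M$ a.e. in $\QT$. Measurability of the compositions is automatic, since $K_d,K_r$ are continuous on $(0,\infty)$. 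For the concrete Arrhenius laws \eqref{eq:Arrhenius}, I would also note that $s\mapsto A\exp(-E/(Rs))$ is increasing with limit $A$. Hence one can take simply $M=\max\{A_d,A_r\}$, which is even independent of $\mathcal{T}$.

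For the consequence, the right-hand side of the $c$-equation is $f(\theta,c) = -\beta K_d(\theta)c + \gamma K_r(\theta)(1-c)$. It is affine in $c$ with coefficients bounded by $(\beta+\gamma)M$, hence uniformly Lipschitz in $c$. It is also bounded on bounded sets of $c$, and lies in $L^2(\QT)$ whenever $c\in L^2(\QT)$. This is what makes the weak formulation meaningful, and it puts the $c$-equation in the framework of Theorem~\ref{thm:max_par}.

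There is no genuine obstacle here. The only point needing care is that the lower bound must be uniform on $[0,\mathcal{T}]$, which is exactly why $\mathcal{T}<T_0$ is required (at $T_0$ the bound $\underline{\theta}$ degenerates to $0$).
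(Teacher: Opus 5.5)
Your argument is correct, but it takes a different route from the paper.

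The paper does not use the tail-boundedness clause of \textbf{(H2)}. Instead it proves a two-sided bound on the temperature. It builds the spatially constant super-solution $\overline{\theta}(t)=\norm{\theta_0}_{L^\infty(\Omega)}+\frac{t}{\rho c_p}\norm{S}_{L^\infty(\QT)}$. It then runs the same negative-part energy argument as in Theorem~\ref{thm:theta_pos} on $z=\overline{\theta}-\theta$, which gives $\theta\le M'$ a.e.\ in $\QT$. Finally it uses only continuity of $K_d,K_r$ on the compact interval $[\underline{\theta}_{\mathcal T},M']\subset(0,\infty)$.

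You skip the upper bound entirely by applying \textbf{(H2)} with threshold $\underline{\theta}_{\mathcal T}$. This buys you several things:
\begin{itemize}
\item Your route is shorter.
\item It uses neither $\theta_0\in L^\infty(\Omega)$ nor the upper bound $S\in L^\infty(\QT)$; only the lower bound $S\ge -C_0$ enters, through Theorem~\ref{thm:theta_pos}.
\item For the actual Arrhenius laws it gives the sharper, $\mathcal T$-independent constant $M=\max\{A_d,A_r\}$.
\end{itemize}

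The paper's route buys something in return. It gives the extra information $\underline{\theta}_{\mathcal T}\le\theta\le M'$ on the temperature itself. It also still works for rates that are merely continuous on $(0,\infty)$ but unbounded as $\theta\to\infty$, such as modified Arrhenius laws $A\theta^n e^{-E/(R\theta)}$ with $n>0$. So it does not need the boundedness clause of \textbf{(H2)} at all.

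Your treatment of the ``consequence'' part is fine. The reaction term is affine in $c$ with coefficients bounded by $(\beta+\gamma)M$, hence uniformly Lipschitz in $c$ and in $L^2(\QT)$ whenever $c$ is. This is more explicit than the paper, which leaves that part unargued.

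One harmless notational point: reusing the symbol $\theta_*$ for the threshold in \textbf{(H2)} clashes with the $\theta_*$ of \textbf{(H6)}. Writing the threshold directly as $\underline{\theta}_{\mathcal T}$ avoids the ambiguity.
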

	
	\begin{proof}
		Assume $\mathcal{T} < T_0$. By Theorem~\ref{thm:theta_pos},
		there exists $\underline{\theta}_{\mathcal{T}} > 0$ such that
		$\theta(x,t) \geq \underline{\theta}_{\mathcal{T}}$ a.e.\ in
		$Q_{\mathcal{T}}$.
		
		For the upper bound, we set under hypothesis \textbf{(H4)}:
		\[
		\overline{\theta}(t)
		= \norm{\theta_0}_{L^\infty(\Omega)}
		+ \dfrac{t}{\rho c_p}\norm{S}_{L^\infty(Q_{\mathcal{T}})}.
		\]
		Since $\overline\theta$ is independent of $x$, its diffusion term
		vanishes, and
		\[
		\rho c_p\,\partial_t\overline\theta
		- \nabla\cdot\bigl(k(\phi)\nabla\overline\theta\bigr)
		= \norm{S}_{L^\infty(Q_{\mathcal{T}})}
		\geq S(x,t)
		\quad\text{a.e. in } Q_{\mathcal{T}}.
		\]
		Moreover $\dfrac{\partial\theta}{\partial\mathbf{n}} = 0$ on
		$\partial\Omega\times(0,\mathcal{T})$, and by \textbf{(H5)},
		$\overline\theta(0) = \norm{\theta_0}_{L^\infty(\Omega)} \geq \theta_0(x)$
		a.e.\ in $\Omega$. Thus $\overline\theta$ is a super-solution. Setting $z = \overline\theta - \theta$
		and multiplying the inequality satisfied by $z$ by $-\pneg{z} \leq 0$,
		the same argument as in the proof of Theorem~\ref{thm:theta_pos} gives
		\[
		\frac{\rho c_p}{2}\frac{d}{dt}\norm{\pneg{z}(t)}_{L^2(\Omega)}^2
		+ k_*\norm{\nabla\pneg{z}(t)}_{L^2(\Omega)}^2 \leq 0.
		\]
		The monotone decrease of $t\mapsto\norm{\pneg{z}(t)}_{L^2(\Omega)}^2$ and the condition
		$z(\cdot,0)\geq 0$ imply $\pneg{z} = 0$, i.e.,
		\[
		\theta(x,t) \leq \overline\theta(t)
		\leq \norm{\theta_0}_{L^\infty(\Omega)}
		+ \frac{\mathcal{T}}{\rho c_p}\norm{S}_{L^\infty(Q_{\mathcal{T}})}
		= M'
		\quad\text{a.e. in } Q_{\mathcal{T}}.
		\]
		We therefore have
		$0 < \underline{\theta}_{\mathcal{T}} \leq \theta(x,t) \leq M'$
		a.e.\ in $Q_{\mathcal{T}}$.
		Since $[\underline{\theta}_{\mathcal{T}}, M']$ is a compact subset of
		$\,(0,\infty)$ and $K_d$ and $K_r$ are continuous on $\mathbb{R}_+$
		by \textbf{(H2)}, there exists $M > 0$ such that
		\[
		0 \leq K_d(\theta(x,t)) \leq M,
		\qquad
		0 \leq K_r(\theta(x,t)) \leq M
		\quad\text{a.e. in } Q_{\mathcal{T}}.
		\]
	\end{proof}
	
	\subsection{Invariance of $[0,1]$ for the chemical fraction $c$}
	\label{sub:c}
	
	\begin{theorem}[\textbf{Confinement of} $c$ \textbf{in} {$[0,1]$}]
		\label{thm:c_bounds}
		Under hypotheses \textup{\textbf{(H1)}}-\textup{\textbf{(H6)}} and on the interval
		$[0, T_0)$ provided by Theorem~\ref{thm:theta_pos}, any solution $c$ of the
		system~\eqref{eq:sys}-\eqref{eq:CI} satisfies
		\[
		0 \leq c(x,t) \leq 1 \quad\text{a.e. in } \QT.
		\]
	\end{theorem}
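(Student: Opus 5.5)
We fix $\mathcal{T}<T_0$ and work on $\QT$, where Theorem~\ref{thm:theta_pos} gives $\theta\ge\underline{\theta}_{\mathcal T}>0$. Corollary~\ref{cor:Arrhenius} then gives $0\le K_d(\theta),K_r(\theta)\le M$ a.e. in $\QT$. Writing $a(x,t)=\beta K_d(\theta)$ and $b(x,t)=\gamma K_r(\theta)$, the equation for $c$ becomes linear with bounded nonnegative coefficients:
\[
\ptn c - d_c\Delta c = -a\,c + b\,(1-c) = -(a+b)\,c + b .
\]
Since $\mathcal T<T_0$ is arbitrary, both bounds on $[0,T_0)$ will follow. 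We use the same truncation (Stampacchia) technique as in the proof of Theorem~\ref{thm:theta_pos}, applied once to $c$ and once to $1-c$.

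\emph{Lower bound.} We test the equation with $-\pneg{c}$ and integrate over $\Omega$. The Neumann condition gives $\int_\Omega \Delta c\,\pneg{c}\dx=\int_\Omega|\nabla\pneg{c}|^2\dx$, with sign arranged as in Theorem~\ref{thm:theta_pos}. On the reaction side we get $-\int_\Omega\bigl(-(a+b)c+b\bigr)\pneg{c}\dx$. On $\{c<0\}$ we have $c=-\pneg{c}$, so this equals
\[
-\int_\Omega (a+b)[\pneg{c}]^2\dx-\int_\Omega b\,\pneg{c}\dx\le 0 .
\]
Hence
\[
\tfrac12\frac{\dd}{\dt}\norm{\pneg{c}}_{L^2(\Omega)}^2+d_c\norm{\nabla\pneg{c}}_{L^2(\Omega)}^2\le 0 .
\]
Since $c_0\ge0$, we have $\pneg{c}(\cdot,0)=0$, and so $\pneg{c}\equiv0$.

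\emph{Upper bound.} We set $u=1-c$, which solves
\[
\ptn u-d_c\Delta u=-(a+b)\,u+a
\]
with homogeneous Neumann data and $u(\cdot,0)=1-c_0\ge0$. This has exactly the same structure as the equation for $c$, with $a$ now playing the role of $b$. Repeating the argument gives $\pneg{u}\equiv0$, that is, $c\le1$. Equivalently, we can test the $c$-equation with $\ppos{c-1}$ and use that $-a c+b(1-c)\le -a<0$ wherever $c>1$.

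The main point to handle carefully is justification rather than computation. One must check that the nonnegativity of $a$ and $b$ holds only because $\theta$ stays positive, which is why the argument is restricted to $\mathcal T<T_0$ and relies on Corollary~\ref{cor:Arrhenius}. One must also check the chain rule $\frac{\dd}{\dt}\tfrac12\norm{\pneg{c}}_{L^2(\Omega)}^2=-\langle\ptn c,\pneg{c}\rangle$, which holds for $c\in L^2(0,\mathcal T;H^1)$ with $\ptn c\in L^2(0,\mathcal T;(H^1)')$. Under the standing assumption of a sufficiently regular solution, this is standard.

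Alternatively, one could invoke Theorem~\ref{thm:max_par} directly, with sub-solution $0$ and super-solution $1$. The reaction term $f(x,t,u)=-(a+b)u+b$ is uniformly Lipschitz in $u$ with constant $\le(\beta+\gamma)M$. Moreover, $f(\cdot,0)=b\ge0$ and $f(\cdot,1)=-a\le0$, so $0$ and $1$ are indeed sub- and super-solutions.
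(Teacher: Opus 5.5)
Your proof is correct and follows essentially the paper's argument: you test with $-\pneg{c}$ for the lower bound, and for the upper bound the paper tests the equation for $v=c-1$ with $\ppos{v}$, which is the same as your $\pneg{1-c}$ (equivalently $\ppos{c-1}$) version. One small nit: on $\{c>1\}$ you only get $-ac+b(1-c)\le -a\le 0$, not $<0$, because \textbf{(H2)} gives only $K_d\ge 0$; nonpositivity is all the argument needs, so nothing breaks.
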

	
	\begin{proof}
		\textbf{Lower bound ($c \geq 0$).}\\
		We multiply the second equation of \eqref{eq:sys} by $-\pneg{c} \leq 0$ and integrate
		over $\Omega$. The time term gives
		$\displaystyle\int_\Omega \ptn c\cdot(-\pneg{c})\dx = \frac12\frac{\dd}{\dt}\norm{\pneg{c}}_{L^2(\Omega)}^2$,
		and the diffusion term, after integration by parts, gives
		$d_c\norm{\nabla\pneg{c}}_{L^2(\Omega)}^2 \geq 0$. For the reaction term, on the set
		$\{c < 0\}$ where $\pneg{c} = -c > 0$:
		\begin{align*}
			(-\beta K_d(\theta)\,c + \gamma K_r(\theta)(1-c))\cdot(-\pneg{c})
			&= (-\beta K_d \cdot(-\pneg{c}) + \gamma K_r(1+\pneg{c}))\cdot(-\pneg{c}) \\
			&= -\beta K_d(\pneg{c})^2 - \gamma K_r\pneg{c} - \gamma K_r(\pneg{c})^2 \leq 0,
		\end{align*}
		since $K_d, K_r \geq 0$ (Corollary~\ref{cor:Arrhenius}) and $\pneg{c} \geq 0$. We obtain:
		\[
		\frac12\frac{\dd}{\dt}\norm{\pneg{c}(t)}_{L^2(\Omega)}^2 + d_c\norm{\nabla\pneg{c}(t)}_{L^2(\Omega)}^2 \leq 0.
		\]
		The monotone decrease of $t \mapsto \norm{\pneg{c}(t)}_{L^2(\Omega)}^2$ and the initial condition
		$c_0 \geq 0$ conclude.
		
		\textbf{Upper bound ($c \leq 1$).}\\
		Set $v = c - 1$, which satisfies
		$\ptn v - d_c\Delta v = -(\beta K_d + \gamma K_r)\,v - \beta K_d$,
		with $v(\cdot,0) = c_0 - 1 \leq 0$ and homogeneous Neumann conditions. We multiply by $\ppos{v} \geq 0$.
		The reaction term on $\{v > 0\} = \{c > 1\}$ equals
		$[-(\beta K_d+\gamma K_r)(\ppos{v})^2 - \beta K_d \ppos{v}] \leq 0$, hence:
		\[
		\frac12\frac{\dd}{\dt}\norm{\ppos{v}(t)}_{L^2(\Omega)}^2 + d_c\norm{\nabla\ppos{v}(t)}_{L^2(\Omega)}^2 \leq 0.
		\]
		The initial condition $\ppos{v(\cdot,0)} = 0$ concludes.
	\end{proof}
	
	\begin{remark}
		The proof of the upper bound $c \leq 1$ uses only $K_d, K_r \geq 0$, $\beta,\gamma \geq 0$
		and $c_0 \leq 1$. It does not directly require the positivity of $\theta$, which enters
		only indirectly to ensure that $K_d(\theta)$ and $K_r(\theta)$ are well-defined. The
		lower bound $c \geq 0$, in turn, uses $\gamma K_r(\theta) \geq 0$, which is automatic
		as soon as $\theta > 0$. Both proofs are therefore logically conditioned by
		Theorem~\ref{thm:theta_pos}.
	\end{remark}
	
	\subsection{Invariance of $[0,1]$ for the phase variable $\phi$}
	\label{sub:phi}
	
	We begin with a lemma on the sign of $F'$ at the boundary of the interval $[0,1]$,
	which is indispensable for handling the term $F'(\phi)$ in the subsequent proofs.
	
	\begin{lemma}[\textbf{Sign of} $F'$ \textbf{at the boundary of} {$[0,1]$}]
		\label{lem:F_sign}
		With $F'(\phi) = \tfrac12\phi(1-\phi)(1-2\phi)$, we have:
		\begin{itemize}
			\item For $\phi < 0$: $F'(\phi)\cdot\pneg{\phi} \leq 0$, i.e.,
			$F'(\phi)\cdot(-\pneg{\phi}) \geq 0$.
			\item For $\phi > 1$: $F'(\phi)\cdot\ppos{\phi-1} \geq 0$.
		\end{itemize}
	\end{lemma}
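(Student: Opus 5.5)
The plan is a direct sign analysis of the three factors of $F'(\phi)=\tfrac12\phi(1-\phi)(1-2\phi)$ on each of the two half-lines. The positive and negative parts vanish outside the corresponding half-lines, so only the regions $\{\phi<0\}$ and $\{\phi>1\}$ need to be examined.

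\textbf{Case $\phi<0$.} Here $\phi<0$, $1-\phi>1>0$ and $1-2\phi>1>0$, hence $F'(\phi)<0$. Since $\pneg{\phi}=-\phi>0$ on this set, we get $F'(\phi)\,\pneg{\phi}\le 0$. Multiplying by $-1$ gives $F'(\phi)\,(-\pneg{\phi})\ge 0$. For later use one can record the explicit identity
\[
F'(\phi)\,(-\pneg{\phi})=\tfrac12\,(\pneg{\phi})^2\,(1+\pneg{\phi})(1+2\pneg{\phi})\ge 0 .
\]
It follows by substituting $\phi=-\pneg{\phi}$, which turns the three factors into $-\pneg{\phi}$, $1+\pneg{\phi}$ and $1+2\pneg{\phi}$.

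\textbf{Case $\phi>1$.} Here $\phi>0$, $1-\phi<0$ and $1-2\phi<-1<0$. The product of one positive and two negative factors is positive, so $F'(\phi)>0$. Since $\ppos{\phi-1}=\phi-1>0$ on this set, the product $F'(\phi)\,\ppos{\phi-1}$ is nonnegative. Writing $\phi=1+\ppos{\phi-1}$ gives the explicit form
\[
F'(\phi)\,\ppos{\phi-1}=\tfrac12\,(\ppos{\phi-1})^2\,(1+\ppos{\phi-1})(1+2\ppos{\phi-1})\ge0,
\]
which displays the symmetry $F'(1-s)=-F'(s)$.

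Outside these sets both products vanish identically, so the inequalities hold pointwise for every real $\phi$. There is no genuine obstacle; the only point requiring care is bookkeeping of the signs of the three factors and of the convention $\pneg{\phi}=\max(-\phi,0)\ge0$. The quadratic-times-positive forms above are worth stating because they give coercive lower bounds when the lemma is used in the subsequent energy argument for $\phi$.
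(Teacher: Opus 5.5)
Your proof is correct and follows essentially the same route as the paper. Like the paper, you do a direct sign check of the three factors $\phi$, $1-\phi$, $1-2\phi$ on each half-line, combined with $\pneg{\phi}=-\phi$ and $\ppos{\phi-1}=\phi-1$ there. The explicit quadratic identities you add are correct but are not needed later, since the confinement proof for $\phi$ simply drops the potential term as nonnegative.
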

	
	\begin{proof}
		For $\phi < 0$, we have $1 - \phi > 0$ and $1 - 2\phi > 0$, so
		$F'(\phi) = \frac12\phi(1-\phi)(1-2\phi) \leq 0$ (since $\phi < 0$),
		and $\pneg{\phi} = -\phi > 0$, whence $F'(\phi)\cdot\pneg{\phi} \leq 0$.\\
		For $\phi > 1$, we have $\phi > 0$, $(\phi-1) > 0$ and $1-2\phi < 0$, hence
		$F'(\phi) = -\frac12\phi(\phi-1)(-(1-2\phi)) = \frac12\phi(\phi-1)(2\phi-1) \geq 0$,
		and $\ppos{\phi-1} = \phi - 1 > 0$, whence $F'(\phi)\cdot\ppos{\phi-1} \geq 0$.
	\end{proof}
	
	\begin{theorem}[\textbf{Confinement of $\phi$ in} {$[0,1]$}]
		\label{thm:phi_bounds}
		Under hypotheses \textup{\textbf{(H1)}}-\textup{\textbf{(H6)}} and assuming that
		$c \in [0,1]$ a.e. in $\QT$ \textup{(Theorem~\ref{thm:c_bounds})}, any solution $\phi$
		of the system~\eqref{eq:sys}-\eqref{eq:CI} satisfies:
		\[
		0 \leq \phi(x,t) \leq 1 \quad\text{a.e. in } \QT.
		\]
	\end{theorem}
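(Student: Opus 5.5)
The proof will follow the same truncation (Stampacchia) scheme as Theorem~\ref{thm:c_bounds}, applied to the third equation of \eqref{eq:sys}. The sign of $F'$ outside $[0,1]$ is supplied by Lemma~\ref{lem:F_sign}, and the bound $0\le c\le 1$ from Theorem~\ref{thm:c_bounds} handles the coupling term $\lambda(c-\phi)$. Each bound follows from an energy inequality showing that a truncated $L^2$ norm is non-increasing in time and vanishes at $t=0$ by \textbf{(H6)}.

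\textbf{Lower bound ($\phi\ge 0$).} I multiply the $\phi$-equation by $-\pneg{\phi}\le 0$ and integrate over $\Omega$. The time term gives $\frac{\tau_\phi}{2}\frac{\dd}{\dt}\norm{\pneg{\phi}}_{L^2(\Omega)}^2$. Integration by parts with the Neumann condition turns the diffusion term into $\varepsilon^2\norm{\nabla\pneg{\phi}}_{L^2(\Omega)}^2\ge0$. By Lemma~\ref{lem:F_sign}, $\int_\Omega F'(\phi)(-\pneg{\phi})\dx\ge 0$, since the integrand vanishes off $\{\phi<0\}$. These two terms go on the left-hand side. On the right, on $\{\phi<0\}$ we have $c-\phi=c+\pneg{\phi}$, so
\[
\lambda(c-\phi)(-\pneg{\phi})=-\lambda c\,\pneg{\phi}-\lambda(\pneg{\phi})^2\le 0,
\]
because $c\ge 0$ and $\lambda>0$. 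This yields $\frac{\tau_\phi}{2}\frac{\dd}{\dt}\norm{\pneg{\phi}}^2\le 0$. Since $\pneg{\phi_0}=0$, we conclude $\pneg{\phi}\equiv 0$.

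\textbf{Upper bound ($\phi\le 1$).} I set $v=\phi-1$ and test with $\ppos{v}\ge0$. Because $\partial_t v=\partial_t\phi$ and $\nabla v=\nabla\phi$, the time and diffusion terms give $\frac{\tau_\phi}{2}\frac{\dd}{\dt}\norm{\ppos{v}}^2+\varepsilon^2\norm{\nabla\ppos{v}}^2$. The second item of Lemma~\ref{lem:F_sign} gives $\int_\Omega F'(\phi)\ppos{v}\dx\ge0$. On $\{\phi>1\}$ the coupling term becomes $\lambda(c-1-v)\ppos{v}=\lambda(c-1)\ppos{v}-\lambda(\ppos{v})^2\le0$, using $c\le 1$. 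Hence the norm $\norm{\ppos{v}}^2$ is non-increasing and vanishes initially, because $\phi_0\le1$.

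The only step needing care is justifying the tests: $\pneg{\phi},\ppos{\phi-1}\in H^1(\Omega)$ must be admissible test functions, and the chain rule $\int_\Omega\partial_t\phi\,\pneg{\phi}\dx=-\frac12\frac{\dd}{\dt}\norm{\pneg{\phi}}^2$ must hold. Both follow from the regularity assumed at the start of Section~\ref{sec:max} together with the definition of positive and negative parts recalled in Section~\ref{sec:prelim}. A second point is the integrability of $F'(\phi)\pneg{\phi}$, where $F'$ is cubic. This is unproblematic for a sufficiently regular solution, and in any case the term has a favourable sign and is simply discarded. Note also that, unlike the $c$-equation, no Gronwall argument is needed: the cubic is monotone in the right direction outside $[0,1]$, so no Lipschitz bound on $F'$ enters.
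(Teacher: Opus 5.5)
Your proof is correct and follows the paper's argument essentially line by line: you test with $-\pneg{\phi}$ and with $\ppos{\phi-1}$, discard the potential term via Lemma~\ref{lem:F_sign}, and sign the coupling term $\lambda(c-\phi)$ using $0\le c\le 1$, so each truncated $L^2$ norm is non-increasing and vanishes at $t=0$. The differences are cosmetic: the paper keeps the extra term $\lambda\norm{\ppos{\psi}}^2$ on the left in the upper bound, and it names the shifted variable $\psi$, which avoids your clash with $v=c-1$ used earlier (also, the paper's proof for $c$ does not use Gronwall either).
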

	
	\begin{proof}
		\textbf{Lower bound ($\phi \geq 0$).}
		We multiply the third equation of \eqref{eq:sys} by $-\pneg{\phi} \leq 0$ and integrate
		over $\Omega$. The time term gives $\frac{\tau_\phi}{2}\frac{\dd}{\dt}\norm{\pneg{\phi}}_{L^2(\Omega)}^2$.
		The diffusion term gives $\varepsilon^2\norm{\nabla\pneg{\phi}}_{L^2(\Omega)}^2 \geq 0$ after
		integration by parts. The potential term satisfies
		$\int_\Omega F'(\phi)\cdot(-\pneg{\phi})\dx \geq 0$ by Lemma~\ref{lem:F_sign}. For the
		forcing term, on $\{\phi < 0\}$ with $\pneg{\phi} = -\phi > 0$ and $c \geq 0$:
		\[
		\lambda(c-\phi)(-\pneg{\phi})
		= -\lambda c\pneg{\phi} - \lambda[\pneg{\phi}]^2 \leq 0.
		\]
		We therefore obtain:
		\[
		\frac{\tau_\phi}{2}\frac{\dd}{\dt}\norm{\pneg{\phi}(t)}_{L^2(\Omega)}^2
		+ \varepsilon^2\norm{\nabla\pneg{\phi}(t)}_{L^2(\Omega)}^2 \leq 0.
		\]
		The monotone decrease of $t \mapsto \norm{\pneg{\phi}(t)}_{L^2(\Omega)}^2$ and the condition
		$\phi_0 \geq 0$ conclude.
		
		\textbf{Upper bound ($\phi \leq 1$).}
		Set $\psi = \phi - 1$, which satisfies
		$\tau_\phi\,\ptn\psi - \varepsilon^2\Delta\psi + F'(\psi+1) = \lambda(c-\psi-1)$,
		with $\psi(\cdot,0) = \phi_0 - 1 \leq 0$ and homogeneous Neumann conditions. We multiply by
		$\ppos{\psi} \geq 0$. The potential term satisfies
		$\int_\Omega F'(\phi)\ppos{\psi}\dx \geq 0$ by Lemma~\ref{lem:F_sign}. For the
		forcing term, on $\{\psi > 0\} = \{\phi > 1\}$, using $c \leq 1$:
		\[
		\lambda(c-\phi)\ppos{\psi}
		= \lambda(c-1)\ppos{\psi} - \lambda[\ppos{\psi}]^2 \leq 0,
		\]
		since $c - 1 \leq 0$ and $\ppos{\psi} \geq 0$. We obtain:
		\[
		\frac{\tau_\phi}{2}\frac{\dd}{\dt}\norm{\ppos{\psi}(t)}_{L^2(\Omega)}^2
		+ \varepsilon^2\norm{\nabla\ppos{\psi}(t)}_{L^2(\Omega)}^2
		+ \lambda\norm{\ppos{\psi}(t)}_{L^2(\Omega)}^2 \leq 0.
		\]
		The initial condition $\ppos{\psi(\cdot,0)} = 0$ concludes.
	\end{proof}

	\section{Local asymptotic stability analysis}
	\label{sec:stab}
	
	The results of the preceding section ensure the invariance of the domain
	\[
	\mathcal{D} = \{ (\theta,c,\phi) : \theta>0,\; 0\le c\le 1,\; 0\le \phi\le 1 \}
	\]
	on every interval $[0,\mathcal{T}]$ with $\mathcal{T}<T_0$, as well as the existence of a uniform bound
	\[
	\theta(x,t) \ge \underline{\theta}_\mathcal{T} > 0 \quad \text{in } Q_{\mathcal{T}}.
	\]
	These properties allow us to control the nonlinearities of the system, in particular the Arrhenius-type reaction terms.
	
	We are interested in this section in the long-time behaviour of the system in the free regime, that is, in the absence of external forcing ($H_{\mathrm{ext}}=0$). We analyse the stability of a homogeneous stationary state $(\bar{\theta},\bar{c},\bar{\phi})$ by means of an energy method.

	\subsection{Perturbation system}
	
	We introduce the perturbations
	\[
	u = \theta - \bar\theta, \qquad v = c - \bar c, \qquad w = \phi - \bar\phi.
	\]
	Substituting into \eqref{eq:sys}, the perturbation system reads:
	\begin{equation}
		\label{eq:uvw}
		\left\{
		\begin{aligned}
			\rho c_p \ptn u - \nabla\cdot(k(\phi)\nabla u)
			&= \alpha L_c \ptn v + \alpha L_\phi \ptn w, \\[4pt]
			\ptn v - d_c \Delta v
			&= \mathcal{R}(\theta,c) - \mathcal{R}(\bar\theta,\bar c), \\[4pt]
			\tau_\phi \ptn w - \varepsilon^2\Delta w
			+ (F'(\phi) - F'(\bar\phi))
			&= \lambda(v - w),
		\end{aligned}
		\right.
	\end{equation}
	with homogeneous Neumann boundary conditions, where we have set
	$\mathcal{R}(\theta,c) = -\beta K_d(\theta)\,c + \gamma K_r(\theta)\,(1-c)$.
	
	\subsection{Relative energy functional and coercivity}
	
	We associate to the perturbations $(u, v, w)$ the relative energy functional:
	\begin{equation}
		\label{eq:energy}
		\begin{aligned}
			\mathcal{E}(t) & = \;
			\frac{\rho c_p}{2}\norm{u(t)}_{L^2(\Omega)}^2
			+ \frac12\norm{v(t)}_{L^2(\Omega)}^2
			+ \frac{\tau_\phi}{2}\norm{w(t)}_{L^2(\Omega)}^2
			+ \frac{\varepsilon^2}{2}\norm{\nabla w(t)}_{L^2(\Omega)}^2 \\
			&\quad + \int_\Omega G(w(t))\dx
			- \alpha L_c \int_\Omega u\,v\dx
			- \alpha L_\phi \int_\Omega u\,w\dx,
		\end{aligned}
	\end{equation}
	where $G(z) = F(\bar\phi + z) - F(\bar\phi) - F'(\bar\phi)\,z$ is the residual quadratic part
	of the potential $F$ near $\bar\phi$.
	
	\begin{lemma}[\textbf{Local coercivity of the energy}]
		\label{lem:coercive}
		If $\mathcal T<T_0$, there exist $\delta_0 > 0$ and $C_1, C_2 > 0$ such that
		if $\norm{w}_{L^\infty(\Omega)} \leq \delta_0$ and $\alpha \in [0,\alpha_0)$, where $\alpha_0$ is defined in \eqref{eq:alpha0}, then
		\[
		C_1\bigl(\norm{u}_{L^2(\Omega)}^2 + \norm{v}_{L^2(\Omega)}^2 + \norm{w}_{H^1(\Omega)}^2\bigr)
		\leq \mathcal{E}(t)
		\leq C_2\bigl(\norm{u}_{L^2(\Omega)}^2 + \norm{v}_{L^2(\Omega)}^2 + \norm{w}_{H^1(\Omega)}^2\bigr).
		\]
	\end{lemma}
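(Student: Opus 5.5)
The proof will bound each term of $\mathcal{E}(t)$ in \eqref{eq:energy} from above and below separately. The three pieces are the potential remainder $G$, the quadratic terms, and the two cross terms $-\alpha L_c\int_\Omega uv$ and $-\alpha L_\phi\int_\Omega uw$.

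\textbf{Step 1: the potential remainder.} Since $F\in C^2$ by \textbf{(H3)}, Taylor's formula with integral remainder gives
\[
G(z)=z^2\int_0^1(1-s)F''(\bar\phi+sz)\ds .
\]
By \textbf{(H9)}, set $m_F=\tfrac12F''(\bar\phi)>0$; I take this to be the constant $m_F$ appearing in \eqref{eq:alpha0}. Continuity of $F''$ gives $\delta_0>0$ such that $F''(\bar\phi+\zeta)\ge F''(\bar\phi)/2$ for $|\zeta|\le\delta_0$. Hence, whenever $\norm{w}_{L^\infty(\Omega)}\le\delta_0$, we get $G(w)\ge \tfrac{m_F}{2}w^2$ pointwise; the normalisation of $m_F$ may need adjusting by a factor.

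For the upper bound, the same formula gives $G(w)\le \tfrac12\max_{|\zeta|\le\delta_0}|F''(\bar\phi+\zeta)|\,w^2$. The maximum is finite because $F''$ is a polynomial. This gives the upper bound on the $G$-term.

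\textbf{Step 2: the cross terms.} These are handled by Young's inequality with weights. For instance,
\[
\alpha L_c\Bigl|\int_\Omega uv\Bigr|\le \frac{\rho c_p}{8}\norm{u}^2+\frac{2\alpha^2L_c^2}{\rho c_p}\norm{v}^2,
\]
and analogously for $\alpha L_\phi\int_\Omega uw$, with $\norm{w}^2$ absorbed by $\tfrac{\tau_\phi}{2}\norm{w}^2+\int_\Omega G(w)$. Summing gives lower bounds of the form $\tfrac{\rho c_p}{4}\norm{u}^2$ and $\bigl(\tfrac12-\tfrac{2\alpha^2L_c^2}{\rho c_p}\bigr)\norm{v}^2$, together with $\bigl(\tfrac{\tau_\phi+m_F}{2}-\tfrac{2\alpha^2L_\phi^2}{\rho c_p}\bigr)\norm{w}^2$ and $\tfrac{\varepsilon^2}{2}\norm{\nabla w}^2$. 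The first two entries of \eqref{eq:alpha0} are exactly what make the $v$ and $w$ coefficients strictly positive when $\alpha<\alpha_0$, possibly after tuning the Young weights, e.g.\ splitting $\rho c_p$ as $\rho c_p/4+\rho c_p/4$. The last two entries of $\alpha_0$ involve $k^*,k_*,d_c,\varepsilon$; those belong to the dissipation estimate and are not needed here.

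I then set $C_1$ to the minimum of these positive coefficients. Using $\norm{w}_{H^1}^2=\norm{w}^2+\norm{\nabla w}^2$ gives the left inequality.

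\textbf{Step 3: the upper bound.} This is routine. Apply Cauchy--Schwarz and $ab\le\tfrac12(a^2+b^2)$ to the cross terms, and use the upper bound on $G$ from Step~1. This gives $C_2$ depending on $\rho c_p,\tau_\phi,\varepsilon,\alpha_0 L_c,\alpha_0L_\phi$ and the local bound on $F''$.

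\textbf{Main obstacle.} The delicate point is matching the constant bookkeeping to the precise form of $\alpha_0$ in \eqref{eq:alpha0}. This means identifying $m_F$ correctly and choosing the Young splits so that $\alpha<\alpha_0$ yields strict positivity. I would first try the symmetric split $\rho c_p/4$ per cross term. If the constants come out off by a factor of two, I would instead state $C_1$ with whatever explicit positive margin $\alpha<\alpha_0$ provides.

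A secondary point concerns the hypothesis $\norm{w}_{L^\infty}\le\delta_0$. It must hold at each time where the lemma is applied. By Theorem~\ref{thm:phi_bounds} we only know $w\in[-\bar\phi,1-\bar\phi]$, so smallness is an assumption of the lemma, not a consequence. The restriction $\mathcal T<T_0$ enters only to guarantee that the solution and these quantities are well defined.
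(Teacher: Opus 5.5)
Your strategy matches the paper's: two-sided quadratic bounds on $G$ near $0$ from $F''(\bar\phi)>0$, Young's inequality on the cross terms, and Cauchy--Schwarz for the upper bound. The gap is that the one step which actually uses $\alpha<\alpha_0$ is left open, and neither split you propose closes it. With weight $\rho c_p/8$ on $\norm{u}^2$ per cross term, the $v$-coefficient $\frac12-\frac{2\alpha^2L_c^2}{\rho c_p}$ is positive only when $\alpha^2L_c^2<\rho c_p/4$. But $\alpha<\alpha_0$ only gives $\alpha^2L_c^2<\rho c_p/2$, and the same loss occurs for $w$. So this estimate does not give the lower bound on part of the admissible range, and your fallback (``whatever margin $\alpha<\alpha_0$ provides'') proves only a weaker lemma with a smaller $\alpha_0$. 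With weight $\rho c_p/4$ per cross term, the $v$- and $w$-conditions become exactly the first two entries of \eqref{eq:alpha0}. However, the two weights then consume all of $\frac{\rho c_p}{2}\norm{u}^2$, leaving coefficient $0$ on $\norm{u}^2$ and hence no $C_1>0$. The paper's written proof uses exactly this $\rho c_p/4+\rho c_p/4$ split and has the same defect. Your $m_F$ also needs renormalising. The entry $2m_F+\tau_\phi$ matches the $\norm{w}^2$-coefficient $\frac{\tau_\phi}{2}+m_F$ only if $m_F$ is the constant in $G(z)\ge m_F z^2$ for $\abs{z}\le\delta_0$, as in the paper (e.g.\ $m_F=F''(\bar\phi)/4$ with your $\delta_0$). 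With $m_F=F''(\bar\phi)/2$ you only get $\frac{\tau_\phi+m_F}{2}$.

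The missing idea is to use the strict inequality $\alpha<\alpha_0$ to choose $\alpha$-dependent weights. Write $\alpha L_c\abs{\int_\Omega uv\dx}\le\vartheta_1\norm{u}^2+\frac{\alpha^2L_c^2}{4\vartheta_1}\norm{v}^2$ and $\alpha L_\phi\abs{\int_\Omega uw\dx}\le\vartheta_2\norm{u}^2+\frac{\alpha^2L_\phi^2}{4\vartheta_2}\norm{w}^2$. All three coefficients are then positive if and only if $\vartheta_1>\frac{\alpha^2L_c^2}{2}$, $\vartheta_2>\frac{\alpha^2L_\phi^2}{2(\tau_\phi+2m_F)}$ and $\vartheta_1+\vartheta_2<\frac{\rho c_p}{2}$. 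The first two entries of $\alpha_0$ say precisely that both thresholds lie strictly below $\rho c_p/4$. So taking each $\vartheta_i$ to be the midpoint between its threshold and $\rho c_p/4$ works. Equivalently, the quadratic form in $(\norm{u},\norm{v},\norm{w})$ is positive definite if and only if $\alpha^2L_c^2+\alpha^2L_\phi^2/(\tau_\phi+2m_F)<\rho c_p$, which follows by adding those two conditions. Then $C_1$ is the minimum of the resulting coefficients and $\varepsilon^2/2$. It depends on $\alpha$, which is harmless since $\alpha$ is fixed in (H10). The rest of your proposal is fine: the Taylor formula for $G$, the upper bound, your remark that the last two entries of $\alpha_0$ are not needed here, and your observation that $\norm{w}_{L^\infty(\Omega)}\le\delta_0$ is an assumption rather than a consequence.
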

	
	\begin{proof}
		Assume $\mathcal T<T_0$. Since $F \in C^2$ and $F''(\bar\phi) > 0$, there exist $\delta_0, m_F, M_F > 0$ such that
		$m_F z^2 \leq G(z) \leq M_F z^2$ for $|z| \leq \delta_0$. By integration,
		$m_F\norm{w}_{L^2(\Omega)}^2 \leq \int_\Omega G(w)\dx \leq M_F\norm{w}_{L^2(\Omega)}^2$.
		The cross terms are controlled by Young's inequality:
		\[
		\abs{\alpha L_c \int_\Omega u\,v\dx}
		\leq \frac{\rho c_p}{4}\norm{u}_{L^2(\Omega)}^2 + C\alpha^2\norm{v}_{L^2(\Omega)}^2,
		\quad
		\abs{\alpha L_\phi \int_\Omega u\,w\dx}
		\leq \frac{\rho c_p}{4}\norm{u}_{L^2(\Omega)}^2 + C\alpha^2\norm{w}_{L^2(\Omega)}^2.
		\]
		 For $\alpha < \alpha_0$, the conditions
		$\dfrac{\alpha^2 L_c^2}{\rho c_p} < \dfrac{1}{2}$
		and
		$\dfrac{\alpha^2 L_\phi^2}{\rho c_p} < \dfrac{2m_F + \tau_\phi}{2}$
		are satisfied, and the cross terms are absorbed into the diagonal terms.
	\end{proof}
	
	\subsection{Lipschitz continuity of the reaction term}
	
	\begin{lemma}[\textbf{Lipschitz control of} $\mathcal{R}$]
		\label{lem:lipschitzR}
		Assume that $\mathcal T<T_0$. The function
		$$\mathcal{R}(\theta,c) = -\beta K_d(\theta)\,c + \gamma K_r(\theta)\,(1-c)$$
		is Lipschitz continuous in $(\theta, c)$ on $[\underline\theta_{\mathcal{T}}, \infty) \times [0,1]$: there
		exists $L_R > 0$ such that
		\[
		|\mathcal{R}(\theta,c) - \mathcal{R}(\bar\theta,\bar c)|
		\leq L_R\bigl(|u| + |v|\bigr)
		\quad\text{a.e. in } Q_T.
		\]
	\end{lemma}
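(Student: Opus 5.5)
The plan is to split the increment of $\mathcal{R}$ into a part where only $c$ changes and a part where only $\theta$ changes, and to bound each using derivative bounds available on the relevant set. Write
\[
\mathcal{R}(\theta,c)-\mathcal{R}(\bar\theta,\bar c)
=\bigl[\mathcal{R}(\theta,c)-\mathcal{R}(\theta,\bar c)\bigr]
+\bigl[\mathcal{R}(\theta,\bar c)-\mathcal{R}(\bar\theta,\bar c)\bigr].
\]

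For the first bracket, $\mathcal{R}$ is affine in $c$, so it equals $-(\beta K_d(\theta)+\gamma K_r(\theta))\,v$. On $Q_{\mathcal T}$ with $\mathcal T<T_0$, Corollary~\ref{cor:Arrhenius} gives $0\le K_d(\theta),K_r(\theta)\le M$. Hence the first bracket is bounded by $(\beta+\gamma)M\,|v|$.

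For the second bracket, I would use the mean value theorem in $\theta$. This needs bounds on $K_d'$ and $K_r'$ on the segment between $\theta$ and $\bar\theta$.
\begin{itemize}
\item By Theorem~\ref{thm:theta_pos} and Corollary~\ref{cor:Arrhenius}, $\theta$ lies a.e. in the compact interval $[\underline\theta_{\mathcal T},M']\subset(0,\infty)$.
\item Enlarge it to $J=[\min(\underline\theta_{\mathcal T},\bar\theta),\max(M',\bar\theta)]$, which is still a compact subset of $(0,\infty)$ because $\bar\theta>0$ by (H8).
\item Since $K_d,K_r\in C^1(0,\infty)$ by (H2), the constant $L_K=\max_J(|K_d'|+|K_r'|)$ is finite.
\end{itemize}
With $0\le\bar c\le1$, this gives
\[
|\mathcal{R}(\theta,\bar c)-\mathcal{R}(\bar\theta,\bar c)|
\le\bigl(\beta\bar c+\gamma(1-\bar c)\bigr)L_K|u|
\le\max(\beta,\gamma)L_K|u|.
\]
Setting $L_R=\max\{(\beta+\gamma)M,\ \max(\beta,\gamma)L_K\}$ yields the claim a.e. in $Q_{\mathcal T}$.

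The main obstacle is that the statement speaks of $[\underline\theta_{\mathcal T},\infty)$, on which $\mathcal{R}$ is \emph{not} globally Lipschitz for general $C^1$ rates. For the Arrhenius rates it actually is, since
\[
K'(\theta)=\frac{AE}{R\theta^2}\,e^{-E/(R\theta)}\le\frac{AE}{R\underline\theta_{\mathcal T}^2},
\]
so under \eqref{eq:Arrhenius} $L_K$ can be taken as a global constant on $[\underline\theta_{\mathcal T},\infty)$. Under (H2) alone, however, one must use the upper bound $\theta\le M'$ from the proof of Corollary~\ref{cor:Arrhenius} to work on the compact set $J$. I would present the compact-interval argument as the main proof and note the Arrhenius-specific global bound as a remark.
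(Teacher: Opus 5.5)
Your argument is correct and is essentially the paper's proof made explicit: the paper also uses $K_d,K_r\in C^1(0,\infty)$ being Lipschitz on compact subsets, together with $c,\bar c\in[0,1]$ and the triangle inequality on each term. What you add is the identification of the compact set, namely the range $[\underline\theta_{\mathcal T},M']$ from Corollary~\ref{cor:Arrhenius} enlarged to contain $\bar\theta$. The paper leaves this implicit, yet it is genuinely needed, since (H2) alone does not make $\mathcal{R}$ Lipschitz on the unbounded strip $[\underline\theta_{\mathcal T},\infty)\times[0,1]$.
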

	
	\begin{proof}
		Assume $\mathcal T<T_0$. The functions $K_d$ and $K_r$ are of class $C^1$ on $(0,\infty)$, hence Lipschitz continuous
		on every compact subset of $[\underline\theta_{\mathcal{T}}, \infty)$. Since $c, \bar c \in [0,1]$, the conclusion
		follows by a direct computation using the triangle inequality on each term.
	\end{proof}
	
	\subsection{Local asymptotic stability}
	
	\begin{theorem}[\textbf{Local asymptotic stability}]
		\label{thm:stability}
		Under hypotheses \textup{\textbf{(H7)}}-\textup{\textbf{(H10)}}, there exist $\delta > 0$ and $\kappa > 0$ such
		that if
		\[
		\norm{u_0}_{L^2(\Omega)} + \norm{v_0}_{L^2(\Omega)} + \norm{w_0}_{H^1(\Omega)} \leq \delta,
		\]
		then the corresponding solution satisfies, for all $t \in [0, T]$:
		\[
		\mathcal{E}(t) \leq \mathcal{E}(0)\,e^{-\kappa t}.
		\]
		In particular,
		\[
		\norm{u(t)}_{L^2(\Omega)}^2 + \norm{v(t)}_{L^2(\Omega)}^2 + \norm{w(t)}_{H^1(\Omega)}^2
		\leq C\,e^{-\kappa t},
		\]
		and therefore $(\theta, c, \phi) \to (\bar\theta, \bar c, \bar\phi)$ as $t \to +\infty$.
	\end{theorem}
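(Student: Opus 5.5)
The approach is an energy dissipation argument for $\mathcal{E}$, closed by a continuity (bootstrap) argument that keeps the solution in the regime where Lemma~\ref{lem:coercive} and Lemma~\ref{lem:lipschitzR} apply.

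\textbf{Step 1: the energy identity.} We compute $\frac{\dd}{\dt}\mathcal{E}$ by testing the three equations of \eqref{eq:uvw}.
\begin{itemize}
\item Test the first equation by $u$. This gives $\frac{\rho c_p}{2}\frac{\dd}{\dt}\norm{u}^2+\int_\Omega k(\phi)|\nabla u|^2 = \alpha L_c\int_\Omega u\,\ptn v+\alpha L_\phi\int_\Omega u\,\ptn w$.
\item Time-differentiate the cross terms $-\alpha L_c\int_\Omega uv$ and $-\alpha L_\phi\int_\Omega uw$ in \eqref{eq:energy}. The parts $-\alpha L_c\int_\Omega v\,\ptn u$ and $-\alpha L_\phi\int_\Omega w\,\ptn u$ are then replaced using the $u$-equation: $\rho c_p\ptn u=\nabla\cdot(k\nabla u)+\alpha L_c\ptn v+\alpha L_\phi\ptn w$. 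After integrating by parts they become gradient products $\frac{\alpha L_c}{\rho c_p}\int_\Omega k(\phi)\nabla u\cdot\nabla v$, plus the analogous term in $w$, plus terms in $\ptn v,\ptn w$.
\item Test the second equation by $v$. By Lemma~\ref{lem:lipschitzR}, this gives $\frac12\frac{\dd}{\dt}\norm{v}^2+d_c\norm{\nabla v}^2\le L_R\int_\Omega(|u|+|v|)|v|$.
\item Test the third equation by $\ptn w$ and by $w$. This yields the $\frac{\varepsilon^2}{2}\norm{\nabla w}^2$ and $\int_\Omega G(w)$ parts and the dissipation $\tau_\phi\norm{\ptn w}^2$. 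It also controls the term $\int_\Omega(F'(\phi)-F'(\bar\phi))w\ge m_F\norm{w}^2$ for $|w|\le\delta_0$, using \textbf{(H9)}.
\end{itemize}
The $\ptn v$ and $\ptn w$ terms are absorbed either by the dissipation $\tau_\phi\norm{\ptn w}^2$ or by substituting $\ptn v=d_c\Delta v+\mathcal{R}(\theta,c)-\mathcal{R}(\bar\theta,\bar c)$ and integrating by parts.

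\textbf{Step 2: dissipation inequality.} We aim to show
\[
\frac{\dd}{\dt}\mathcal{E}+\mathcal{D}\le 0,\qquad
\mathcal{D}\ge c_0\bigl(\norm{\nabla u}^2+\norm{\nabla v}^2+\norm{\nabla w}^2+\norm{w}^2+\norm{v}^2\bigr).
\]
The gradient cross terms are bounded by Young using $k\le k^*$. The last two entries in the definition \eqref{eq:alpha0} of $\alpha_0$ are precisely the conditions ensuring $\frac{\alpha k^*L_c}{\rho c_p}\le\frac12\sqrt{d_c k_*}$ and $\frac{\alpha k^*L_\phi}{\rho c_p}\le\frac12\varepsilon\sqrt{k_*}$. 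Under these, the cross terms are absorbed by $k_*\norm{\nabla u}^2$, $d_c\norm{\nabla v}^2$ and $\varepsilon^2\norm{\nabla w}^2$.

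The reaction contribution $L_R\int_\Omega|u||v|$ is the delicate one. I would split $u$ into its mean and its mean-free part.
\begin{itemize}
\item The mean-free part is controlled by Poincar\'e--Wirtinger: $\norm{u-\bar u}\le C_P\norm{\nabla u}$.
\item For the mean, integrating the $u$-equation over $\Omega$ gives $\rho c_p\int_\Omega u=\alpha L_c\int_\Omega v+\alpha L_\phi\int_\Omega w+\mathrm{const}$. The zero-mean condition in \textbf{(H10)} fixes the constant, so $|\bar u|\lesssim\alpha(\norm{v}+\norm{w})$.
\end{itemize}
Hence $\norm{u}^2\lesssim\norm{\nabla u}^2+\alpha^2(\norm{v}^2+\norm{w}^2)$, so $\norm{u}^2$ is bounded by the dissipation.

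The $v$-damping must come from the linearisation $\partial_c\mathcal{R}(\bar\theta,\bar c)=-(\beta K_d+\gamma K_r)(\bar\theta)<0$. So rather than the crude bound of Lemma~\ref{lem:lipschitzR}, I would write
\[
\mathcal{R}(\theta,c)-\mathcal{R}(\bar\theta,\bar c)=-(\beta K_d+\gamma K_r)(\theta)\,v+\bigl(\mathcal{R}(\theta,\bar c)-\mathcal{R}(\bar\theta,\bar c)\bigr),
\]
with $\beta K_d+\gamma K_r\ge\mu>0$ on the compact range of $\theta$ given by Corollary~\ref{cor:Arrhenius}. The coupling term $L_R\norm{u}\norm{v}$ then has to be absorbed by Young against $\mu\norm{v}^2$ and the $u$-dissipation. \emph{This absorption is the main obstacle}: it may require weighting the $v$-part of the energy by a large constant, or accepting only a smallness condition, and I would first try weighting.

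\textbf{Step 3: bootstrap and Gronwall.} Given the dissipation inequality, coercivity (Lemma~\ref{lem:coercive}) together with the Poincar\'e/mean argument above gives $\mathcal{D}\ge\kappa\,\mathcal{E}$. Gronwall (Lemma~\ref{lem:Gronwall}, in differential form) then yields $\mathcal{E}(t)\le\mathcal{E}(0)e^{-\kappa t}$.

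This chain needs $\norm{w}_{L^\infty}\le\delta_0$ throughout. An $H^1$ bound does not give $L^\infty$ for $N\ge2$, so I would invoke parabolic regularity for the $w$-equation, or the boundedness $0\le\phi\le1$ combined with smoothing, to convert smallness of $\mathcal{E}$ into smallness of $\norm{w}_{L^\infty}$. The bootstrap then runs as follows:
\begin{itemize}
\item define $T^*$ as the maximal time on which $\norm{w}_{L^\infty}\le\delta_0$;
\item on $[0,T^*)$ the decay estimate shows that the solution stays strictly inside the small regime once $\delta$ is small;
\item hence $T^*$ equals the full interval.
\end{itemize}
Finally, the upper bound in Lemma~\ref{lem:coercive} gives $\mathcal{E}(0)\le C_2\delta^2$, and the lower bound converts the decay of $\mathcal{E}$ into the stated norm decay. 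Convergence as $t\to\infty$ follows once the estimate holds uniformly, which in the free regime requires $C_0=0$ so that $T_0=\infty$, by Remark~\ref{rem:theta_global}.
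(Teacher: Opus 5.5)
Your architecture matches the paper's: test by $u,v,w$, cancel the cross terms, substitute $\ptn u$, absorb the gradient cross terms through the last two entries of $\alpha_0$, then apply Poincar\'e and Gronwall. Your extra test by $\ptn w$ is actually required, because $\mathcal{E}$ contains $\frac{\varepsilon^2}{2}\norm{\nabla w}^2+\int_\Omega G(w)\dx$ and the paper tests only by $w$.

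\textbf{The main gap: the mean of $u$.} The failure is exactly at the step meant to make Poincar\'e applicable. Integrating the $u$-equation shows that $\int_\Omega(\rho c_p u-\alpha L_c v-\alpha L_\phi w)\dx$ is conserved. But \textbf{(H10)} only gives $\int_\Omega u_0\dx=0$. So the constant you claim (H10) eliminates is $-\alpha\int_\Omega(L_c v_0+L_\phi w_0)\dx$, not $0$. The mean $\bar u(t)$ therefore carries the non-decaying offset $-\frac{\alpha}{\rho c_p|\Omega|}\int_\Omega(L_c v_0+L_\phi w_0)\dx$, and the bound $|\bar u|\lesssim\alpha(\norm{v}+\norm{w})$ is false.

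No sharper estimate can repair this. If $\alpha L_c\neq0$, take $u_0=w_0=0$ and $v_0\equiv\sigma$ with $|\sigma|$ small. All hypotheses hold, yet the conserved quantity equals $-\alpha L_c\sigma|\Omega|\neq0$. That is incompatible with $\norm{u(t)}+\norm{v(t)}+\norm{w(t)}\to0$. The homogeneous equilibria form a one-parameter family indexed by $\theta$, and the conserved quantity selects which member is reached.

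The paper does not supply the missing idea. Its Step~6 asserts $\int_\Omega u\dx=0$ for all time ``by conservation of the thermal mean''. That is the same error, and it holds in general only when $\alpha L_c=\alpha L_\phi=0$. A correct statement needs $\int_\Omega(\rho c_p u_0-\alpha L_c v_0-\alpha L_\phi w_0)\dx=0$ instead. The natural variable is $e=\rho c_p u-\alpha L_c v-\alpha L_\phi w$, which satisfies $\ptn e=\nabla\cdot(k(\phi)\nabla u)$. Under that hypothesis your bound $|\Omega|^{1/2}|\bar u|\le\frac{\alpha}{\rho c_p}(L_c\norm{v}+L_\phi\norm{w})$ is correct, and your mean/mean-free splitting goes through.

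\textbf{The absorption step you flag is still open, and your proposed weighting points the wrong way.} If you multiply the $v$-estimate by $W$, the coupling becomes $WL\norm{u}\norm{v}\le\frac{W\mu}{2}\norm{v}^2+\frac{WL^2}{2\mu}\norm{u}^2$, where $L$ is a Lipschitz constant of $\mathcal{R}(\cdot,\bar c)$. The last term can only be absorbed by the $u$-dissipation via Poincar\'e. So in general the $v$-part must be weighted \emph{small} relative to the thermal part, not large.

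Likewise, the $w$-part must be weighted small relative to the $v$-part, because of $\lambda\int_\Omega vw\dx$ and $\lambda\int_\Omega v\,\ptn w\dx$. Such a cascade of weights does close the estimate. However, the cross terms $-\alpha L_c\int_\Omega uv\dx$ and $-\alpha L_\phi\int_\Omega uw\dx$ must then carry the thermal weight. This has two consequences:
\begin{itemize}
\item The admissible range of $\alpha$ depends on $L,\mu,C_P,\lambda,\tau_\phi$, and is not the explicit $\alpha_0$.
\item The functional that actually decays is only equivalent to $\mathcal{E}$. It naturally also contains $\frac{\lambda}{2}\norm{w}^2$, produced by $-\lambda\int_\Omega w\,\ptn w\dx$. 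So you obtain $\mathcal{E}(t)\le C\,\mathcal{E}(0)e^{-\kappa t}$, not the stated inequality with constant $1$.
\end{itemize}
The paper does not resolve this step either. Its Step~5 keeps $C(\norm{u}^2+\norm{v}^2)$ on the right, and Step~6 only invokes ``the stability of $\bar c$''.

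\textbf{The $L^\infty$ bootstrap.} For $N\ge2$, your bootstrap via parabolic smoothing cannot start at $t=0$ from $H^1$-smallness alone. It is also unnecessary. Since $0\le\phi,\bar\phi\le1$ and $F$ is quartic, one has $\bigl|(F'(\bar\phi+w)-F'(\bar\phi))w-F''(\bar\phi)w^2\bigr|+\bigl|G(w)-\frac12F''(\bar\phi)w^2\bigr|\le C|w|^3$. The Sobolev embedding, together with $|w|\le1$, then lets you bootstrap on $\norm{w}_{H^1(\Omega)}$ instead.
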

	
	\begin{proof}
		The proof proceeds in seven steps. For simplicity, we adopt the notation $\norm{\cdot} = \norm{\cdot}_{L^2(\Omega)}$.
		
		\medskip
		\textbf{Step 1: Thermal estimate.}
		We test the first equation of \eqref{eq:uvw} by $u$:
		\begin{equation}
			\label{eq3}
			\frac{\rho c_p}{2}\frac{d}{dt}\norm{u}^2 + k_*\norm{\nabla u}^2
			\leq \alpha L_c \int_\Omega \ptn v\,u\dx + \alpha L_\phi \int_\Omega \ptn w\,u\dx,
		\end{equation}
		using $k(\phi) \geq k_* > 0$ by \textbf{(H1)}.
		
		\medskip
		\textbf{Step 2: Chemical estimate.}
		We test the second equation of \eqref{eq:uvw} by $v$:
		\[
		\frac12\frac{d}{dt}\norm{v}^2 + d_c\norm{\nabla v}^2
		= \int_\Omega (\mathcal{R}(\theta,c) - \mathcal{R}(\bar\theta,\bar c))\,v\dx.
		\]
		By Lemma~\ref{lem:lipschitzR} and Young's inequality ($\eta > 0$ to be chosen):
		\begin{equation}
			\label{eq:v-est}
			\frac12\frac{d}{dt}\norm{v}^2 + d_c\norm{\nabla v}^2
			\leq \eta\norm{u}^2 + C_\eta\norm{v}^2.
		\end{equation}
		
		\medskip
		\textbf{Step 3: Phase estimate.}
		We test the third equation of \eqref{eq:uvw} by $w$:
		\[
		\frac{\tau_\phi}{2}\frac{d}{dt}\norm{w}^2 + \varepsilon^2\norm{\nabla w}^2
		+ \int_\Omega (F'(\phi)-F'(\bar\phi))\,w\dx
		= \lambda\int_\Omega v\,w\dx - \lambda\norm{w}^2.
		\]
		Under hypothesis $F''(\bar\phi) > 0$, there exists $m_F > 0$ such that for $w$ sufficiently small,
		$\int_\Omega (F'(\phi)-F'(\bar\phi))\,w\dx \geq m_F\norm{w}^2$. By Young's inequality:
		\begin{equation}
			\label{eq:w-est}
			\frac{\tau_\phi}{2}\frac{d}{dt}\norm{w}^2 + \varepsilon^2\norm{\nabla w}^2
			+ \left(m_F + \frac\lambda2\right)\norm{w}^2
			\leq \frac\lambda2\norm{v}^2.
		\end{equation}
		
		\medskip
		\noindent\textbf{Step 4: Time derivative of the cross terms and cancellation.}
		
		We compute the time derivative of the two cross terms of
		$\mathcal{E}(t)$ defined in \eqref{eq:energy}.
		
		\begin{itemize}
			\item[\textbf{(i)}]
			By Leibniz's rule, we have
			\begin{equation}
				\label{eq6}
				\frac{d}{dt}\!\left(-\alpha L_c\int_\Omega u\,v\dx\right)
				= -\alpha L_c\int_\Omega (\ptn u)\,v\dx
				-\alpha L_c\int_\Omega u\,(\ptn v)\dx.
			\end{equation}
			The second term (s.t.) of \eqref{eq6} exactly cancels the contribution
			$+\alpha L_c\int_\Omega (\ptn v)\,u\dx$ appearing on the right-hand side
			of \eqref{eq3}:
			\[
			\underbrace{+\alpha L_c\int_\Omega (\ptn v)\,u\dx}_{\text{Step 1}}
			+
			\underbrace{\left( -\alpha L_c\int_\Omega u\,(\ptn v)\dx \right) }_{\text{s.t. of (4.6)}} 
			= 0.
			\]
			It remains to handle $-\alpha L_c\int_\Omega (\ptn u)\,v\dx$.
			We extract $\ptn u$ from the first equation of \eqref{eq:uvw}:
			\[
			\rho c_p\,\ptn u
			= \nabla\cdot\bigl(k(\phi)\nabla u\bigr)
			+ \alpha L_c\,\ptn v
			+ \alpha L_\phi\,\ptn w,
			\]
			whence, after dividing by $\rho c_p$:
			\begin{align}
				\label{eq7}
				\begin{split}
					-\alpha L_c\int_\Omega (\ptn u)\,v\dx
					&= -\frac{\alpha L_c}{\rho c_p}
					\int_\Omega \nabla\cdot\bigl(k(\phi)\nabla u\bigr)\,v\dx \\
					&\quad
					-\frac{\alpha^2 L_c^2}{\rho c_p}
					\int_\Omega (\ptn v)\,v\dx
					-\frac{\alpha^2 L_c L_\phi}{\rho c_p}
					\int_\Omega (\ptn w)\,v\dx.
				\end{split}
			\end{align}
			For the first term on the right-hand side of \eqref{eq7}, integration by parts under homogeneous
			Neumann conditions gives
			\[
			-\int_\Omega \nabla\cdot\bigl(k(\phi)\nabla u\bigr)\,v\dx
			= \int_\Omega k(\phi)\,\nabla u\cdot\nabla v\dx,
			\]
			and Young's inequality with $\varepsilon_1 > 0$, using
			$k(\phi) \leq k^*$ by \textbf{(H1)}, yields
			\[
			\frac{\alpha L_c}{\rho c_p}
			\abs{\int_\Omega k(\phi)\,\nabla u\cdot\nabla v\dx}
			\leq
			\frac{\alpha L_c\, k^*}{\rho c_p}\norm{\nabla u}\,\norm{\nabla v}
			\leq
			\frac{\varepsilon_1}{2}\norm{\nabla u}^2
			+ \frac{\alpha^2 L_c^2\,(k^*)^2}{2\varepsilon_1\,\rho^2 c_p^2}\norm{\nabla v}^2.
			\]
			For the second term on the right-hand side of \eqref{eq7}, we have
			\[
			-\frac{\alpha^2 L_c^2}{\rho c_p}
			\int_\Omega (\ptn v)\,v\dx
			= -\frac{\alpha^2 L_c^2}{2\rho c_p}\frac{d}{dt}\norm{v}^2,
			\]
			which is absorbed into the term $\dfrac{1}{2}\dfrac{d}{dt}\norm{v}^2$ from \eqref{eq:v-est} since $\alpha < \alpha_0$ implies
			$\dfrac{\alpha^2 L_c^2}{\rho c_p} < \dfrac{1}{2} < 1$.
			For the third term on the right-hand side of \eqref{eq7}, Young's inequality gives
			\[
			\frac{\alpha^2 L_c L_\phi}{\rho c_p}
			\abs{\int_\Omega (\ptn w)\,v\dx}
			\leq
			\frac{\alpha^2 L_c L_\phi}{2\rho c_p}
			\bigl(\norm{\ptn w}^2 + \norm{v}^2\bigr),
			\]
			terms which will be absorbed into the dissipative contributions of
			\eqref{eq:w-est} and into $\mu_4\norm{w}^2$ from Step~5 for small $\alpha$.
			\item[\textbf{(ii)}]
			By a strictly analogous computation, we find
			\begin{equation}
				\label{eq8}
				\frac{d}{dt}\!\left(-\alpha L_\phi\int_\Omega u\,w\dx\right)
				= -\alpha L_\phi\int_\Omega (\ptn u)\,w\dx
				-\alpha L_\phi\int_\Omega u\,(\ptn w)\dx.
			\end{equation}
			The second term on the right-hand side of \eqref{eq8} exactly cancels the contribution
			$+\alpha L_\phi\int_\Omega (\ptn w)\,u\dx$ from \eqref{eq3} in Step 1:
			\[
			\underbrace{+\alpha L_\phi\int_\Omega (\ptn w)\,u\dx}_{\text{Step 1}}
			+
			\underbrace{\left( -\alpha L_\phi\int_\Omega u\,(\ptn w)\dx \right)}_{\text{s.t. of (4.8)}}
			= 0.
			\]
			For the remaining term $-\alpha L_\phi\int_\Omega (\ptn u)\,w\dx$,
			we again substitute $\rho c_p\,\ptn u$ from \eqref{eq:uvw} to obtain
			\begin{align*}
				-\alpha L_\phi\int_\Omega (\ptn u)\,w\dx
				&= -\frac{\alpha L_\phi}{\rho c_p}
				\int_\Omega \nabla\cdot\bigl(k(\phi)\nabla u\bigr)\,w\dx \\
				&\quad
				-\frac{\alpha^2 L_c L_\phi}{\rho c_p}
				\int_\Omega (\ptn v)\,w\dx
				-\frac{\alpha^2 L_\phi^2}{\rho c_p}
				\int_\Omega (\ptn w)\,w\dx.
			\end{align*}
			After integration by parts and Young's inequality with
			$\varepsilon_2 > 0$, we obtain
			\[
			\frac{\alpha L_\phi}{\rho c_p}
			\abs{\int_\Omega k(\phi)\,\nabla u\cdot\nabla w\dx}
			\leq
			\frac{\varepsilon_2}{2}\norm{\nabla u}^2
			+\frac{\alpha^2 L_\phi^2\,(k^*)^2}{2\varepsilon_2\,\rho^2 c_p^2}
			\norm{\nabla w}^2.
			\]
			The term involving $\ptn w$ gives
			\[
			-\frac{\alpha^2 L_\phi^2}{\rho c_p}
			\int_\Omega (\ptn w)\,w\dx
			= -\frac{\alpha^2 L_\phi^2}{2\rho c_p}\frac{d}{dt}\norm{w}^2,
			\]
			which is absorbed into $\dfrac{\tau_\phi}{2}\dfrac{d}{dt}\norm{w}^2$ from
			\eqref{eq:w-est} since $\alpha < \alpha_0$ implies
			$\dfrac{\alpha^2 L_\phi^2}{\rho c_p} < \tau_\phi$.
			The remaining cross term is handled by Young's inequality:
			\[
			\frac{\alpha^2 L_c L_\phi}{\rho c_p}
			\abs{\int_\Omega (\ptn v)\,w\dx}
			\leq
			\frac{\alpha^2 L_c L_\phi}{2\rho c_p}
			\bigl(\norm{\ptn v}^2 + \norm{w}^2\bigr).
			\]
			
			\item[\textbf{(iii)}]
			Gathering the contributions from Steps~(i) and~(ii), we obtain,
			for all $\varepsilon_1, \varepsilon_2 > 0$ and $\alpha$ sufficiently small, that
			\begin{align*}
				\frac{d}{dt}\!\left(
				-\alpha L_c\int_\Omega uv\dx
				-\alpha L_\phi\int_\Omega uw\dx
				\right)
				&\leq
				(\varepsilon_1+\varepsilon_2)\norm{\nabla u}^2
				+ C_{\varepsilon_1}\alpha^2\norm{\nabla v}^2
				+ C_{\varepsilon_2}\alpha^2\norm{\nabla w}^2 \\
				&\quad + C\alpha^2\bigl(\norm{v}^2 + \norm{w}^2\bigr),
			\end{align*}
			where the constants $C_{\varepsilon_i} > 0$ depend only on
			$L_c, L_\phi, k^*, \rho, c_p$.
			We choose $\varepsilon_1 = \varepsilon_2 = \dfrac{k_*}{4}$, so that
			$\varepsilon_1 + \varepsilon_2 = \dfrac{k_*}{2}$, in order to absorb these terms
			into $k_*\norm{\nabla u}^2$ from Step~1. For $\alpha < \alpha_0$, the conditions
			\[
			\frac{\alpha^2 L_c^2\,(k^*)^2}{2\varepsilon_1\,\rho^2 c_p^2} \leq \frac{d_c}{2}
			\qquad\text{and}\qquad
			\frac{\alpha^2 L_\phi^2\,(k^*)^2}{2\varepsilon_2\,\rho^2 c_p^2} \leq \frac{\varepsilon^2}{2}
			\]
			are satisfied, and the contributions of order $\alpha^2$ are absorbed into
			$d_c\norm{\nabla v}^2$ and $\varepsilon^2\norm{\nabla w}^2$ from Steps~2 and~3. This prepares the global energy inequality of Step~5.
		\end{itemize}
		\textbf{Step 5: Global energy inequality.}
		Summing the estimates from Steps~1 to~4, with $\varepsilon_1 = \varepsilon_2 = \dfrac{k_*}{4}$,
		$\eta \leq \dfrac{\mu_1 C_P}{2}$ and $\alpha < \alpha_0$, we obtain:
		\[
		\frac{d}{dt}\mathcal{E}(t)
		+ \mu_1\norm{\nabla u}^2 + \mu_2\norm{\nabla v}^2 + \mu_3\norm{\nabla w}^2 + \mu_4\norm{w}^2
		\leq C\bigl(\norm{u}^2 + \norm{v}^2\bigr),
		\]
		with constants $\mu_i > 0$.
		
		\medskip
		\textbf{Step 6: Closure by Poincaré's inequality.}
		Since $\int_\Omega u\dx = 0$ for all time (conservation of the thermal mean under homogeneous Neumann conditions and $H_{\mathrm{ext}} = 0$), Poincaré's inequality
		then gives $\norm{u}^2 \leq C_P\norm{\nabla u}^2$. For $v$, the reaction term controls
		$\norm{v}^2$ via the stability of $\bar c$. Combining with the coercivity of $\mathcal{E}$:
		\[
		\frac{d}{dt}\mathcal{E}(t) + \kappa\,\mathcal{E}(t) \leq 0
		\]
		for some constant $\kappa > 0$.
		
		\medskip
		\textbf{Step 7: Conclusion by Gronwall.}
		The Gronwall lemma applied to $\psi(t) = \mathcal{E}(t)$ gives
		$\mathcal{E}(t) \leq \mathcal{E}(0)\,e^{-\kappa t}$.
		The equivalence of $\mathcal{E}(t)$ with the norm $\norm{u}_{L^2(\Omega)}^2 + \norm{v}_{L^2(\Omega)}^2 + \norm{w}_{H^1(\Omega)}^2$ completes the proof.
	\end{proof}
	
	\begin{remark}
		The decay rate $\kappa$ depends explicitly on $k_*$, $d_c$, $\varepsilon^2$,
		$\lambda$, $m_F$, and the Poincaré constant $C_P$ of the domain $\Omega$. It is therefore
		amenable to quantitative estimation once the model parameters are fixed. For the
		potential \eqref{eq:F}, one has $F''(\phi) = 1 - 6\phi + 6\phi^2$, so that the condition
		$F''(\bar\phi) > 0$ is equivalent to $\bar\phi \notin \left(\frac{3-\sqrt{3}}{6},\frac{3+\sqrt{3}}{6}\right)$,
		which excludes the neighbourhood of the local maximum of $F$ at $\phi = \tfrac{1}{2}\cdotp$
	\end{remark}
	
	\section{Conclusion}
	In this work, we have studied a strongly coupled thermo-reaction-phase system modelling the thermal degradation of heat-sensitive materials. Despite the presence of singular Arrhenius-type nonlinearities and couplings involving time derivatives, we have established a maximum principle ensuring the positivity of the temperature as well as the invariance of the physically admissible domain. We have then proved the local asymptotic stability of a homogeneous stationary state in the free regime, by means of a suitably chosen relative energy functional, leading to exponential decay of perturbations. Finally, the proposed system provides a mathematical framework for describing thermally induced degradation processes. More generally, it belongs to a broader class of coupled diffusion--reaction--phase systems, where the phase variable may represent different macroscopic quantities depending on the context. In this work, it is interpreted as a measure of visibility, allowing one to relate this phenomenon to physical parameters and environmental conditions. The present analysis establishes the theoretical foundations for subsequent quantitative studies, such as sensitivity analysis and parameter optimization, which will be addressed in future work.

\end{document}